\documentclass{ws-rv9x6-modified}

\usepackage{rotating_rv}

\def\hatt{\widehat}
\def\tilda{\widetilde}
\def\half{\hbox{$1\over2$}}
\def\eps{\varepsilon}
\def\RR{\mathord{I\kern-.3em R}}
\def\Var{{\rm Var}}
\def\E{{\rm E}}
\def\N{{\rm N}}
\def\d{{\rm d}}
\def\Pr{{\rm Pr}}
\def\Beta{{\rm Beta}}
\def\data{{\rm data}}
\def\Dir{{\rm Dir}}
\def\Be{{\rm Be}}
\def\be{{\rm be}}
\def\sd{{\rm sd}}

\def\midd{{\,|\,}}
\def\dell{\partial}
\def\rootn{\sqrt{n}}

\def\sumin{\sum_{i=1}^n}
\def\arr{\rightarrow}
\def\tr{{\rm t}}
\def\true{{\rm tr}}
\def\id{{\rm id}}
\def\Bernstein{{Bernshte\u\i n}}
\def\square{{\ \vrule height0.5em width0.5em depth-0.0em}}
\def\beq{\begin{eqnarray}}
\def\eeq{\end{eqnarray}}

\def\beqn{\begin{eqnarray*}}  
\def\eeqn{\end{eqnarray*}}

\begin{document}

\def\labelloc#1{\label{#1:the_current_chapter_suffix}}
\def\refloc#1{\ref{#1:the_current_chapter_suffix}}
\def\refeqloc#1{\refeq{#1:the_current_chapter_suffix}}
\setcounter{section}{0} \setcounter{equation}{0}
\setcounter{theorem}{0} \setcounter{lemma}{0}
\setcounter{corollary}{0} \setcounter{proposition}{0}
\setcounter{definition}{0} \setcounter{example}{0}
\setcounter{remark}{0} \setcounter{question}{0}
\setcounter{notation}{0}\setcounter{figure}{0} \setcounter{table}{0}

\chapter{NONPARAMETRIC QUANTILE INFERENCE \\ USING DIRICHLET PROCESSES}
\markboth{N.L. Hjort \ \& \ S. Petrone}{Dirichlet Quantile Processes}

\author{Nils Lid Hjort and Sonia Petrone}
\address{Department of Mathematics \\
University of Oslo, NORWAY\\
\medskip
IMQ, Bocconi University\\
Milano, ITALY\\
\medskip
Emails: nils@math.uio.no \& sonia.petrone@unibocconi.it}


\begin{abstract}
This chapter deals with nonparametric inference for quantiles from a
Bayesian perspective, using the Dirichlet process. The posterior
distribution for quantiles is characterised, enabling also explicit
formulae for posterior mean and variance. Unlike the Bayes estimator
for the distribution function, our Bayes estimator for the quantile
function is a smooth curve. A \Bernstein--von Mises type theorem is
given, exhibiting the limiting posterior distribution of the
quantile process. Links to kernel-smoothed quantile estimators are
provided. As a side product we develop an automatic nonparametric
density estimator, free of smoothing parameters, with support
exactly matching that of the data range. Nonparametric Bayes
estimators are also provided for other quantile-related quantities,
including the Lorenz curve and the Gini index, for Doksum's shift
curve and for Parzen's comparison distribution in two-sample
situations, and finally for the quantile regression function in
situations with covariates.\\

\noindent{\bf Keywords:} 
Bayesian bootstraps; 
Bayesian quantile regression; 
\Bernstein--von Mises theorem; 
Comparison distribution;
Dirichlet process; 
Doksum's shift function; 
Lorenz curve;
Nonparametric Bayes, 
Quantile inference.
\end{abstract}

\section{Introduction and summary}

Assume data $X_1,\ldots,X_n$ come from some unknown distribution $F$, 
and that interest focusses on one or more quantiles, 
say $Q(y)=F^{-1}(y)$. This chapter develops and discusses methods 
for carrying out nonparametric Bayesian inference for $Q$, 
based on a Dirichlet process prior for $F$. The methods also extend 
to various other quantile-related quantities in other contexts,
notably to various functions and plots for comparing two samples,
like Doksum's shift function (see Doksum, 1974a and Doksum and
Sievers, 1976) and Parzen's (1979, 1982) comparison distribution,
and to quantile regression. A guide-map of our chapter 
is as follows.

We start in Section~2 with setting the framework
and by characterising the prior and posterior distributions
of one or more quantiles. This makes it possible to
derive explicit formulae for the posterior mean,
variance and covariance in Section~3. A noteworthy feature
here is that the posterior mean function is a smooth curve
$\hatt Q(y)$, unlike the traditional Bayes estimator
$\tilda F_n$ for $F$, which has jumps at the data points.
Of particular interest is the non-informative limit
of the Bayes estimator $\hatt Q_0$ when the strength parameter
of the Dirichlet prior is sent to zero. It is seen
to be a \Bernstein-type smoothed quantile method.

In Section~4 we consider Bayes estimators of
the quantile density $q=Q'$ and of the probability density $f=F'$,
formed by the appropriate operations on $\hatt Q$.
A particular construction of interest is the density estimator
$\hatt f_0$, computed by inversion and differentiation
of $\hatt Q_0$. This estimator is nonparametric and automatic,
requires no smoothing parameters, and is supported
on the exact data range, say $[x_{(1)},x_{(n)}]$.
In Section~5 we discuss applications to the Lorenz
curve and the Gini index, which are frequently
used in econometric contexts. We obtain nonparametric
Bayes estimators of these quantities.
Then Section~6 provides Bayesian sister versions
of two important nonparametric plotting strategies
for comparing two populations: Doksum's shift
curve $D(x)$ and Parzen's comparison distribution $\pi(y)$.
Recipes for computing Bayesian credibility bands
are also given. In Section~7 we study large-sample properties
of our estimators, and reach \Bernstein--von Mises
type theorems for the limits of the posterior processes
$\rootn(Q-\hatt Q)$, $\rootn(D-\hatt D)$, $\rootn(\pi-\hatt\pi)$.
This can be used to form certain approximate credibility
intervals for the quantile function, for the shift function,
and for the comparison distribution.
Then in Section~8 results are generalised to
a semiparametric regression framework,
where the regression parameters are given a prior
independent of the quantile process
of the error distribution. Our chapter ends with a list
of concluding comments, some pointing to further
research problems of interest.

\section{The quantile process of a Dirichlet}

This section derives the basic distributional results
about the distribution of random quantiles for Dirichlet
priors, pre and post data. Our point of departure is
a Dirichlet process $F$ with parameter measure
$\alpha(\cdot)=aF_0(\cdot)$, written
$F\sim\Dir(aF_0)$, splitting into constant $a=\alpha(\RR)$
and probability distribution $F_0=\alpha/a$;
for definitions and basic results one may
consult Ferguson (1973, 1974). For a review
of general Bayesian nonparametrics, see Hjort (2003).

\subsection{Prior distributions of quantiles}

For the random $F$, consider its accompanying quantile process
$$ Q(y)=F^{-1}(y)=\inf\{t\colon F(t)\ge y\}. $$
For this left-continuous inverse of the right-continuous $F$ it
holds generally that $Q(y)\le x$ if and only if $y\le F(x)$, even
for cases when $F$, like here, has jumps. It follows, by the basic
Beta distribution property of marginals of Dirichlet processes, that
the distribution of $Q(y)$ can be written 
\beq 
\labelloc{eq:H0a}
H_{0,a}(x)
&=&\Pr\{Q(y)\le x\} \nonumber \\
&=&  1-\Be(y;aF_0(x),a\bar F_0(x))=\Be(1-y;a\bar F_0(x),aF_0(x)).
\eeq
Here and below we let $\Be(\cdot;b,c)$ and $\be(\cdot;b,c)$ denote
respectively the distribution function and the density
of a Beta variable with parameters $(b,c)$,
and $\bar F_0$ is the survival function $1-F_0$.
We allow Beta variables with parameters $(b,0)$
and $(0,c)$; these are with probability one equal to
respectively 1 and 0. Thus $\Be(y;b,0)=0$ and $\Be(y;0,c)=1$
for $y\in[0,1]$.

Note that $H_{0,a}(x)=J_a(F_0(x))$, where
$J_a(x)=\Be(1-y;a(1-x),ax)$ is the distribution of a random
$y$-quantile for the special case of $F_0$ being uniform on $(0,1)$,
say $Q_{\rm uni}(y)$. This means that the distribution of $Q(y)$ in
the general case is the same as the distribution of $F_0^{-1}(Q_{\rm
uni}(y))$. If $F_0$ has a density $f_0$, this also implies that the
prior density of $Q(y)$ is $h_0(x)=j_a(F_0(x))f_0(x)$, where 
\beq
\labelloc{eq:ja} 
j_a(x) 
=\frac{\dell}{\dell x}\int_0^{1-y}
 \frac{\Gamma(a)}{\Gamma(a-ax)\Gamma(ax)}
  u^{a-ax-1}(1-u)^{ax-1}\,\d u 
\eeq
is the density of $Q_{\rm uni}(y)$. 
The point is that the prior densities 
can be computed and displayed via numerical
integration and derivation; see 
Figure~1.

\subsection{Several quantiles simultaneously}

Consider now the joint distribution of two or more $Q$-values.
For $y_1<\cdots<y_k$, we have
\beqn
\Pr\{Q(y_1)\le t_1,\ldots,Q(y_k)\le t_k\}
&=&\Pr\{y_1\le F(t_1),\ldots,y_k\le F(t_k)\} \\
&=&\Pr\{V_1\ge y_1,\ldots,V_1+\cdots+V_k\ge y_k\},
\eeqn
in terms of a Dirichlet vector $(V_1,\ldots,V_k,V_{k+1})$
with parameters $(c_0,\ldots,c_k,c_{k+1})$,
where $c_j=aF_0(t_{j-1},t_j]$; here $F_0(A)$ is the
probability assigned to the set $A$ by the $F_0$
distribution, and $t_0=-\infty$, $t_{k+1}=\infty$.
This in principle determines all aspects of the simultaneous
distribution of the vector of random quantiles.

To give somewhat more qualitative insights into
the joint distribution of the random quantiles,
we start recalling an important and convenient property
of the Dirichlet process. When it is `chopped up'
into smaller pieces, conditioned to have certain
total probabilities on certain sets, the individual
daughter processes become independent
and are indeed still Dirichlet. In detail, if $F$ is Dirichlet $aF_0$,
and one conditions on the event $F(B_1)=z_1,\ldots,F(B_m)=z_m$,
where the $B_i$s form a partition and the $z_i$s sum to 1,
then this creates $m$ new and independent Dirichlet processes on
$B_1,\ldots,B_m$. Specifically, $F(.)/z_i$ is Dirichlet
on its `local sample space' $B_i$ with parameter $aF_0$, that is,
$$F(.)/z_i\sim{\rm Dir}(aF_0)={\rm Dir}(aF_0(B_i)\,F_0(.)/F_0(B_i)). $$
See Hjort (1986, 1996) for this fact about pinned down Dirichlets
and some of its consequences. Note the rescaling of the
Dirichlet parameter, as a new prior strength parameter $aF_0(B_i)$
times the rescaled distribution $F_0(.)/F_0(B_i)$ on set $B_i$.

Consider two quantiles $Q(y_1)$ and $Q(y_2)$, where $y_1<y_2$, for
the prior process. Conditional on $y_2=F(t_2)$, our $F$ splits into
two independent Dirichlet processes on $(-\infty,t_2]$ and
$(t_2,\infty)$. By the general result just described, and arguing as
with equation (\refloc{eq:H0a}), one finds for $t_1\le t_2$ that
\beqn 
\Pr\{Q(y_1)\le t_1\midd y_2=F(t_2)\}
&=&\Pr\{y_1\le y_2F^*(t_1)\} \\
&=&\Be(1-y_1/y_2;aF_0(t_1,t_2],aF_0(-\infty,t_1]),
\eeqn
where $F^*$ is Dirichlet $(aF_0)$ on $(-\infty,t_2]$.
This argument may be extended to the case of three or
more random quantiles, also suitable for simulation purposes.

\subsection{Posterior distributions of quantiles}

Conditionally on the randomly selected $F$, let $X_1,\ldots,X_n$ be
independently drawn from $F$. Since $F$ given data is an updated
Dirichlet with parameter $aF_0+nF_n$, where $F_n$ is the empirical
distribution of the data points, the posterior distribution of
$Q(y)$ may be written as in (\refloc{eq:H0a}), with $aF_0+nF_n$
replacing $aF_0$ there. Assume for simplicity that the data points
are distinct, order them $x_{(1)}<\cdots<x_{(n)}$, and write
$x_{(0)}=-\infty$ and $x_{(n+1)}=\infty$. Then 
\beq
\labelloc{eq:Hna} 
H_{n,a}(x)
&=&\Pr\{Q(y)\le x\midd\data\} \nonumber \\
&=&1-\Be(y;(aF_0+nF_n)(x),(a\bar F_0+n\bar F_n)(x)),
\eeq
in terms of $\bar F_0=1-F_0$ and $\bar F_n=1-F_n$.
For $x_{(i)}\le x<x_{(i+1)}$, this is equal to
$\Be(1-y;a\bar F_0(x)+n-i,aF_0(x)+i)$.
Thus $Q(y)$ has a density of the form
$$h_{n,a}(x)=(\dell/\dell x)\,\Be(1-y;a\bar F_0(x)+n-i,aF_0(x)+i)
  \quad {\rm inside\ }(x_{(i)},x_{(i+1)}), $$
cf.~the calculations leading to (\refloc{eq:ja}), 
and posterior point mass 
\beq 
\labelloc{eq:deltaHna} 
\Delta H_{n,a}(x_{(i)})
&=&\Be(y;aF_0(x_{(i)}-)+i-1,a\bar F_0(x_{(i)}-)+n-i+1) \nonumber \\
& & \qquad\qquad
   -\,\Be(y;aF_0(x_{(i)})+i,a\bar F_0(x_{(i)})+n-i) \nonumber \\
&=&(n+a)^{-1}\be(y;aF_0(x_{(i)})+i,a\bar F_0(x_{(i)})+n-i+1) 
\eeq
at point $x_{(i)}$. The partial integration formula (A1) of the Appendix
is used here, and assumes continuity of $F_0$ at $x_{(i)}$.

If $a$ is sent to zero here there is no posterior probability mass
left between data points; the distribution concentrates on the data
points with probabilities 
\beq 
\labelloc{eq:pn}
p_n(x_{(i)})&=&\Be(y;i-1,n-i+1)-\Be(y;i,n-i) \nonumber\\
&=&{n-1\choose i-1}y^{i-1}(1-y)^{n-i}.
\eeq
These binomial weights concentrate around $y$ for moderate
to large $n$.
We also have the following result, proved in our Appendix,
which says that even if $a$ is large,
the combined posterior probability that $Q(y)$ has
of landing outside the data points goes to zero
as $n$ increases. In other words, the distribution function
$H_{n,a}(x)$ becomes closer and closer to being concentrated
in only the $n$ sample points.


\begin{proposition}
\labelloc{p2.1} 
For fixed positive $a$, the sum of the
posterior point masses $\Delta H_{n,a}(x_{(i)})$ that $Q(y)$ has at
the data points goes to 1 as $n\arr\infty$. 
\end{proposition}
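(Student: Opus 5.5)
The plan is to compare the point masses in (\refloc{eq:deltaHna}) term by term with the binomial weights (\refloc{eq:pn}) of the $a\to0$ limit, which sum to one. Since $H_{n,a}$ is a probability distribution, the sum $S_n=\sum_{i=1}^n\Delta H_{n,a}(x_{(i)})$ is automatically at most $1$. It therefore suffices to show $\liminf_n S_n\ge1$. Throughout I take $0<y<1$ and $F_0$ continuous at the data points, as assumed for (\refloc{eq:deltaHna}).

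Write $p_i=aF_0(x_{(i)})\in[0,a]$. Expanding the Beta density, with parameters summing to $n+a+1$, gives
$$\Delta H_{n,a}(x_{(i)})=\frac{\Gamma(n+a+1)}{(n+a)\Gamma(i+p_i)\Gamma(n-i+1+a-p_i)}\,y^{i+p_i-1}(1-y)^{n-i+a-p_i}.$$
The binomial weight is $p_n(x_{(i)})=\frac{\Gamma(n+1)}{n\,\Gamma(i)\Gamma(n-i+1)}y^{i-1}(1-y)^{n-i}$. Their ratio is
$$r_{n,i}=\frac{\Gamma(n+a+1)}{\Gamma(n+1)}\,\frac{n}{n+a}\,\frac{\Gamma(i)}{\Gamma(i+p_i)}\,\frac{\Gamma(n-i+1)}{\Gamma(n-i+1+a-p_i)}\;y^{p_i}(1-y)^{a-p_i}.$$
I will use the standard estimate $\Gamma(m+s)/\Gamma(m)=m^{s}(1+O(1/m))$, which holds uniformly for $s$ in the compact set $[0,a]$. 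With it,
$$r_{n,i}=\Bigl(\frac{ny}{i}\Bigr)^{p_i}\Bigl(\frac{n(1-y)}{n-i}\Bigr)^{a-p_i}\bigl(1+o(1)\bigr),$$
with the $o(1)$ uniform over indices $i$ for which both $i$ and $n-i$ tend to infinity.

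Next restrict to the window $W_n=\{i\colon |i-ny|\le n^{2/3}\}$. On $W_n$ we have $i/n\to y$ and $(n-i)/n\to1-y$ uniformly. The exponents $p_i$ and $a-p_i$ lie in $[0,a]$, so they are bounded. Hence $\min_{i\in W_n}r_{n,i}\to1$. The binomial weights are the $\mathrm{Bin}(n-1,y)$ probabilities at $i-1$, and Chebyshev's inequality gives $\sum_{i\in W_n}p_n(x_{(i)})\to1$. Combining these,
$$S_n\ge\sum_{i\in W_n}\Delta H_{n,a}(x_{(i)})\ge\Bigl(\min_{i\in W_n}r_{n,i}\Bigr)\sum_{i\in W_n}p_n(x_{(i)})\to1,$$
which proves the proposition.

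The main obstacle is the uniformity of the Gamma-ratio asymptotics. The shifts $p_i$ depend on the data, so we need uniformity in both $i$ and $p_i$. This is handled by the boundedness $p_i\in[0,a]$ together with the fact that, inside $W_n$, both $i$ and $n-i$ grow linearly in $n$. Indices outside the window need no control at all, thanks to the trivial upper bound $S_n\le1$.
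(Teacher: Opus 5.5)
Your proof is correct, but it takes a different route from the paper's. You work directly with the closed-form point masses $\Delta H_{n,a}(x_{(i)})$ and compare them term by term with the $a=0$ binomial weights $p_n(x_{(i)})$ on the window $|i-ny|\le n^{2/3}$. The key tool is a Gamma-ratio estimate that is uniform in the shifts $aF_0(x_{(i)})\in[0,a]$, and the trivial bound $S_n\le 1$ disposes of all indices outside the window. This is close in spirit to the ``Taylor type expansion'' route the paper mentions but does not pursue. The paper instead sketches a probabilistic argument. It splits the posterior $F$ into jumps $D_1,\ldots,D_n$ at the data points and a remainder of total mass $E$ spread over the gaps, with $(D_1,\ldots,D_n,E)\sim\Dir(1,\ldots,1,a)$. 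Then $E\sim{\rm Beta}(a,n)$ is of order $a/n$; by Markov, $E\le a/\sqrt{n}$ with probability at least $1-1/\sqrt{n}$. So $Q=F^{-1}$ is close to the inverse of the data part, whose jumps sit exactly at the data points. The paper's route explains the mechanism and speaks about the whole quantile process at once. As sketched, however, it leaves implicit the step from ``$Q$ is close to $\tilda F^{-1}$'' to ``$Q(y)$ is a data point'' for a fixed level $y$. That step requires bounding the chance that $y$ falls into one of the short $F$-increments, of total length $E$, generated by the gaps. One way is to use that the normalised jumps are independent of $E$ and that uniform order-statistic densities near $y$ are $O(\sqrt{n})$, which gives a bound of order $a/\sqrt{n}$. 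Your computation sidesteps this entirely and holds deterministically for every configuration of distinct data, given $F_0$ continuous at the data points and $0<y<1$. It also yields an explicit rate, $O(n^{-1/3})$ as you have set it up. The price is reliance on the exact point-mass formula and on a uniform asymptotic for $\Gamma(m+s)/\Gamma(m)$.
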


The prior to posterior mechanism is illustrated in 
Figure~1
for the case of the upper quartile $Q(0.75)$, 
with prior guess $F_0=\N(0,1)$, with $n=100$ data points really coming from
$\N(1,1)$. The right panel shows only the posterior probabilities
(\refloc{eq:pn}) corresponding to $a=0$; even for $a=10$ the
(\refloc{eq:deltaHna}) probabilities are quite close to those of
(\refloc{eq:pn}).

\begin{figure}
\labelloc{figure:f1}
\includegraphics[width=4.5in, height=3.0in, angle=0]{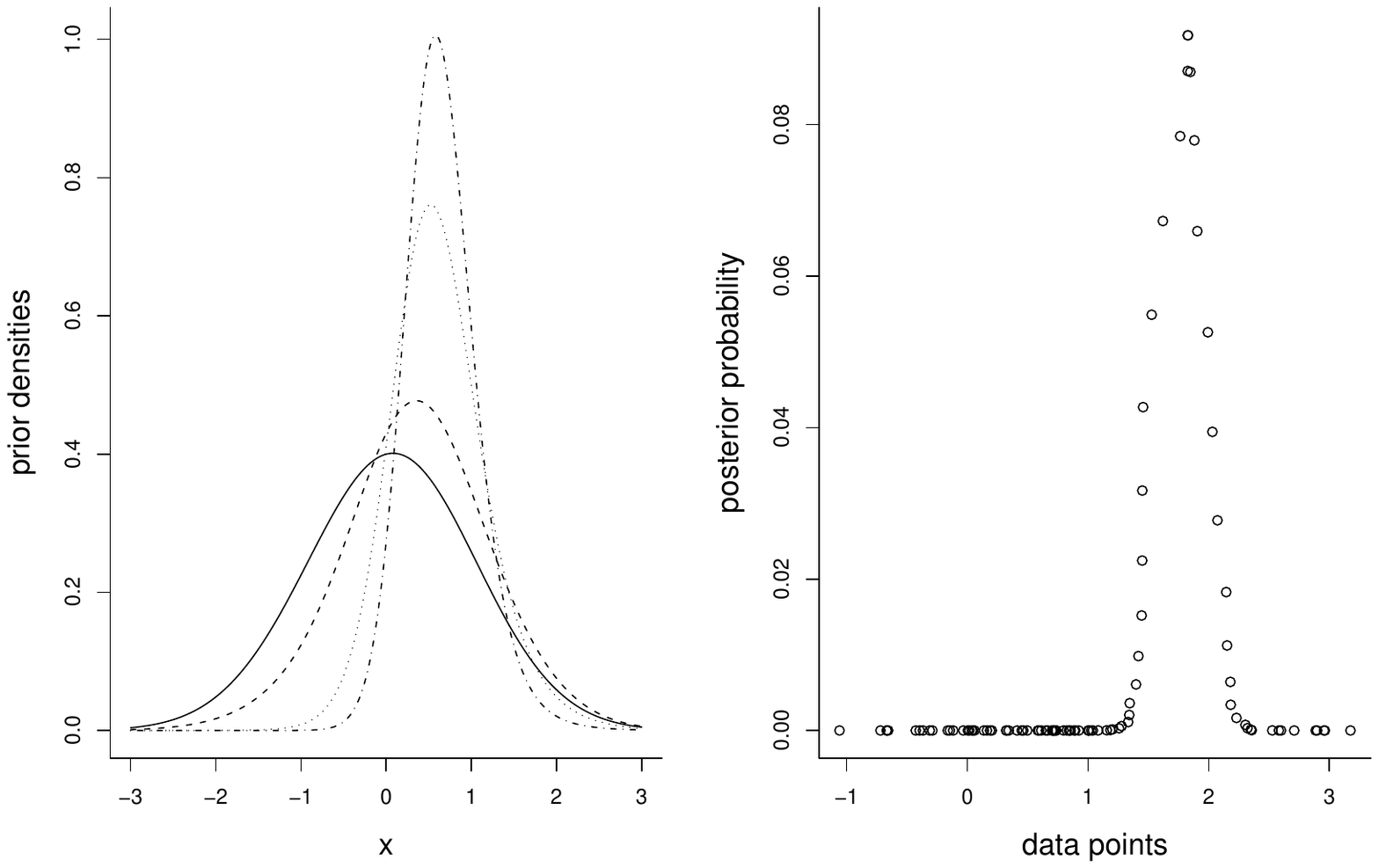}
\caption{
Prior to posterior for a given quantile: The left panel
shows the prior densi\-ties $j_a(F_0(x))f_0(x)$ at quantile
$y=0.75$, for values $a=0.1,1,5,10$, for $F_0$ the standard normal,
with smaller values of $a$ closer to the $f_0$ and larger values of
$a$ tighter around $Q_0(y)=0.675$. The right panel shows the
posterior probabilities (\refloc{eq:pn}) after having observed
$n=100$ data points from the distribution $\N(1,1)$, with true
quartile 1.675. The posterior probability mass outside the data
points equals 0.0002, 0.0017, 0.0085, 0.0181 for the four values of
$a$, respectively.}
\end{figure}

Next consider random quantiles at positions $y_1<\cdots<y_k$.
Then the event $Q(y_1)\le t_1,\ldots,Q(y_k)\le t_k$,
where $t_1\le\cdots\le t_k$, is equivalent to
$$y_1\le V_1,\,y_2\le V_1+V_2,\ldots,\,y_k\le V_1+\cdots+V_k, $$
writing now $V_j=F(t_j)-F(t_{j-1})$ for $j=1,\ldots,k+1$,
where $t_0=-\infty$ and $t_{k+1}=\infty$.
The vector $(V_1,\ldots,V_k,V_{k+1})$
has the appropriate Dirichlet distribution with parameters
$(c_1,\ldots,c_k,c_{k+1})$, where
$c_j=(aF_0+nF_n)(t_{j-1},t_j]$. This fully defines
$\Pr\{Q(y_1)\le t_1,\ldots,Q(y_k)\le t_k\midd\data\}$.
Its limit as $a\arr0$ is discussed below.

\subsection{The objective posterior quantile process}

For the non-informative prior case of $a=0$ we have seen that $Q(y)$
concentrates on the observed data points with binomial probabilities
given in (\refloc{eq:pn}). When considering two quantiles, we find
that $\Pr\{Q(y_1)=x_{(i)}\midd\data,Q(y_2)=x_{(j)}\}$ becomes
\beqn
\Be(1-y_1/y_2;j-i,i)
   &-&\Be(1-y_1/y_2;j-i+1,i-1) \\
   &=&(1/j)\be(1-y_1/y_2;j-i+1,i), 
\eeqn
using (A1) again. Combining this with (\refloc{eq:pn}) one finds
that $(Q(y_1),Q(y_2))$ selects the pair $(x_{(i)},x_{(j)})$ with
probability $p_n(x_{(i)},x_{(j)})$ equal to 
\beq
\labelloc{eq:pnbivariate} 
{(n-1)!\over (j-1)!(n-j)!}
   \!\!\!& &\!\!\!y_2^{j-1}(1-y_2)^{n-j}
   {(1/j)\,j!\over (j-i)!(i-1)!}\Bigl({y_2-y_1\over y_2}\Bigr)^{j-i}
   \Bigl({y_1\over y_2}\Bigr)^{i-1}  \nonumber \\
&=&{n-1\choose i-1,j-i,n-j}y_1^{i-1}(y_2-y_1)^{j-i}(1-y_2)^{n-j}
\eeq 
for $1\le i\le j\le n$. This trinomial structure generalises to
a suitable multinomial one for more than two quantiles at a time.

In fact, the non-informative case corresponds to a
random $F$ which is concentrated at the data points
$x_{(1)}<\cdots<x_{(n)}$ with probabilities
$D_1,\ldots,D_n$ following a Dirichlet distribution
with parameters $(1,\ldots,1)$. This in turn means that
$$Q(y)=x_{(i)} \quad
  {\rm if\ }D_1+\cdots+D_i\le y<D_1+\cdots+D_{i+1}. $$
In yet other words, $Q(y)=x_{(N(y))}$, where $N(y)$ is the smallest
$i$ at which the cumulative sum $S_i=D_1+\cdots+D_i$ exceeds $y$.
One may re-prove (\refloc{eq:pn}) from this, as well as the
trinomial result (\refloc{eq:pnbivariate}) for
$$p_n(x_{(i)},x_{(j)})
   =\Pr\{S_{i-1}<y_1\le S_i\le S_{j-1}<y_2\le S_j\}, $$
via integrations in the distribution for
$(S_{i-1},S_i-S_{i-1},S_{j-1}-S_{i-1},S_j-S_{j-1},1-S_j)$,
which is Dirichlet with parameters $(i-1,1,j-1-i,1,n-j)$.
The easiest argument uses that $S_1,\ldots,S_{n-1}$ forms
an ordered sample of size $n-1$ from the uniform distribution
on the unit interval. For the general case 
of $m$ quantiles one finds that 
$\Pr\{Q(y_1)=x_{(i_1)},\ldots,Q(y_m)=x_{(i_m)}\}$
is equal to 
\beqn
{n-1\choose i_1-1,1,\ldots,i_m-i_{m-1},1,n-i_m}
   y_1^{i_1-1}(y_2-y_1)^{i_2-i_1}\cdots(1-y_m)^{n-i_m},
\eeqn
valid for $y_1<\cdots<y_m$ and $i_1\le\cdots\le i_m$.
This `multinomial structure' hints at connections to
Brownian bridges; such are indeed studied in Section~7.

\section{Bayesian quantile inference}

To carry out Bayesian inference for $Q(y)$, for specific
quantiles or for the full quantile function, several options
are available.


One possibility is to repeatedly simulate full $Q$ functions by
numerically inverting simulated paths of $F$, these being drawn
according to the ${\rm Dir}(aF_0+nF_n)$ distribution. Another is to
work directly with the explicit posterior distribution $H_{n,a}$ of
(\refloc{eq:Hna}) for $Q(y)$, or if necessary with the
generalisations to several quantiles discussed in Section~2.3. One
attractive estimator is
$$Q_n^*(y)={\rm median}\{Q(y)\midd\data\}
  =H_{n,a}^{-1}(\half), $$
which is the Bayes estimator under loss functions
of the type $\int_0^1w(y)|\hatt Q(y)-Q(y)|\,\d y$.
It is not difficult to implement a programme that
for each $y$ finds the posterior median,
from the formula for $H_{n,a}(x)$.
For the special case of $y=\half$,
the posterior median of the random median
is the median of the posterior expectation $\tilda F_n=(aF_0+nF_n)/(a+n)$.
This may also naturally be supplemented with
posterior credibility bands of the type
$[H_{n,a}^{-1}(0.05),H_{n,a}^{-1}(0.95)]$.
It follows from theory developed below that
such a band is secured limiting 90\% pointwise
coverage probability, also in the frequentist sense.
Here, however, we focus on directly computable
Bayes estimators and on posterior variances.


We first set out to compute the posterior mean function of $Q(y)$,
which is the Bayes estimator under quadratic loss.
The informative case $a>0$ is more cumbersome mathematically
than the $a\arr0$ case, and is considered first.
Ferguson (1973, p.~224) pointed out that the posterior expectation 
``is difficult to compute, and may, in fact, not even exist''.
Here we give both precise finiteness conditions
and a formula; such have apparently not been
given earlier in the literature.
From our results in Section~2 it is clear that
when the integrals exist, a formula for the posterior mean
takes the form
\begin{equation}
\labelloc{eq:Qahat} \hatt Q_a(y)=\sumin \Delta
H_{n,a}(x_{(i)})x_{(i)}
  +\sum_{i=0}^n\int_{(x_{(i)},x_{(i+1)})} xh_{n,a}(x)\,\d x,
\end{equation}
with $H_{n,a}$ and $h_{n,a}$ as given in Section~2.3.
Existence requires finiteness
of the first and the last integrals here,
over respectively $(-\infty,x_{(1)})$ and $(x_{(n)},\infty)$.
The following is proved in our Appendix.


\begin{proposition}
\labelloc{p3.1} 
Let $Q=F^{-1}$ have the prior process
induced by a Dirichlet process prior with parameter $aF_0$ for $F$,
where $a$ is positive. Then the posterior mean $\hatt Q_a(y)$ of the
quantile function $Q(y)$ is well-defined and finite if and only if
the prior mean $\E_0|X|=\int |x|\,\d F_0(x)$ is finite. This result
is independent of the sample size $n$ and of the value of $y$, and
is also valid for the prior situation.
\end{proposition}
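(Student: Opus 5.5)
The plan is to write the posterior mean as the difference of two tail integrals. I work with the posterior Dirichlet parameter $aF_0+nF_n$; the prior case is $n=0$. Fix $y\in(0,1)$. Since $\E|Q(y)|=\int_0^\infty\Pr\{Q(y)>x\}\,\d x+\int_0^\infty\Pr\{Q(y)\le -x\}\,\d x$, existence and finiteness of $\hatt Q_a(y)$ is equivalent to finiteness of both integrals. Any bounded range of $x$ contributes at most a finite amount, because the probabilities are bounded by 1. So only the two tails matter:
\begin{itemize}
\item the range $x>x_{(n)}\vee M$ in the first integral;
\item the range $-x<x_{(1)}\wedge(-M)$ in the second.
\end{itemize}
This matches the remark before the proposition that only the first and last integrals in (\refloc{eq:Qahat}) are in question.

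Take the right tail first. By (\refloc{eq:Hna}), for $x\ge x_{(n)}$,
$$\Pr\{Q(y)>x\midd\data\}=\Be(y;aF_0(x)+n,a\bar F_0(x)).$$
The first parameter $b=aF_0(x)+n$ stays in a compact subset of $(0,\infty)$ for $x$ large: it lies in $[n+a/2,n+a]$ once $F_0(x)\ge\half$, and this also covers $n=0$. The second parameter $c=a\bar F_0(x)$ tends to $0$. The key step is a two-sided bound
$$k_1\,c\le\Be(y;b,c)\le k_2\,c$$
for $c\in(0,c_0]$ and $b$ in the compact set, with constants $k_1,k_2$ depending only on $y$, $a$ and $n$.

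To obtain it, write
$$\Be(y;b,c)=\frac{\Gamma(b+c)}{\Gamma(b)\Gamma(c)}\int_0^y u^{b-1}(1-u)^{c-1}\,\d u.$$
\begin{itemize}
\item Use $c\,\Gamma(c)=\Gamma(1+c)\in[\Gamma_{\min},1]$ for $c\in(0,1]$, together with continuity of $\Gamma(b+c)/\Gamma(b)$ on the compact parameter set. This makes the normalising constant comparable to $c$.
\item Because $y<1$ is fixed, $(1-u)^{c-1}$ lies between $1$ and $(1-y)^{-1}$ on $[0,y]$.
\item The factor $\int_0^y u^{b-1}\,\d u=y^b/b$ is bounded above and below by positive constants, uniformly in $b$.
\end{itemize}
It then follows that the right-tail integral is finite if and only if $\int^\infty\bar F_0(x)\,\d x<\infty$, that is, if and only if $\E_0X^+<\infty$.

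The left tail is symmetric. For $-x<x_{(1)}$ one has $\Pr\{Q(y)\le -x\midd\data\}=\Be(1-y;a\bar F_0(-x)+n,aF_0(-x))$, by the formula for $H_{n,a}$ with $i=0$. The same bound, with $1-y$ in place of $y$, gives comparability with $aF_0(-x)$. So that integral is finite if and only if $\E_0X^-<\infty$.

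Combining the two tails gives the claim. The conditions involve only $F_0$ and $a>0$, never $n$, the data, or the particular $y\in(0,1)$, which is why the result is the same before and after data. The main obstacle is making the Beta-tail bound uniform in the varying first parameter and in both directions (upper and lower). The lower bound is what gives the ``only if'' direction, and it needs $a>0$ so that $c$ is a genuine positive multiple of the $F_0$ tail. I would handle it with the explicit Gamma-function estimates listed above rather than with asymptotic equivalences.
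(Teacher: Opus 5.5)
Your proposal is correct and follows essentially the paper's own argument: reduce to the two extreme tail integrals, $\int^\infty\Be(y;aF_0(x)+n,a\bar F_0(x))\,\d x$ and its left-tail counterpart, then use $\Gamma(c)=\Gamma(c+1)/c$ to show the integrands are comparable to $a\bar F_0(x)$ and $aF_0(x)$ respectively. Your two-sided bound is in fact more careful than the paper, which only says the cofactor is bounded, whereas the ``only if'' direction also needs it bounded away from zero, as you show; note too that $b+c=n+a$ exactly, so $\Gamma(b+c)=\Gamma(n+a)$ is constant.
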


For implementation purposes, formula (\refloc{eq:Qahat}) is a little
awkward. A simpler equivalent formula is 
\beq
\labelloc{eq:Qhatformula} 
\hatt Q_a(y) 
&=&\int_0^\infty\Pr\{Q(y)\ge
x\midd\data\}\,\d x
   -\int_{-\infty}^0\Pr\{Q(y)\le x\mid\data\}\,\d x  \nonumber \\
&=&\int_0^\infty\Be(y;aF_0(x)+nF_n(x),a\bar F_0(x)+n\bar F_n(x))\,\d x \\
& &\qquad\qquad
   -\int_{-\infty}^0\Be(1-y;a\bar F_0(x)+n\bar F_n(x),
   aF_0(x)+nF_n(x))\,\d x. \nonumber
\eeq 
For large $a$ dominating $n$ in size, this estimator is close
to the prior guess function $F_0^{-1}(y)$. Even a moderate or large
$a$ will however be `washed out' by the data as $n$ grows, as is
apparent from Proposition \refloc{p2.1} and made clearer in
Section~7.


Particularly interesting is the nonparametric quantile estimator
emerging by letting $a$ tend to zero, since the posterior then
concentrates on the data points alone. By (\refloc{eq:pn}), the
result is 
\beq 
\labelloc{eq:Q0hat} 
\hatt Q_0(y)=\sumin {n-1\choose i-1}
y^{i-1}(1-y)^{n-i}x_{(i)}. 
\eeq 
This is a $(n-1)$-degree
polynomial function that smoothly climbs from $\hatt Q_0(0)=x_{(1)}$
to $\hatt Q_0(1)=x_{(n)}$. It may of course be used also outside the
present Bayesian framework. Its frequentist properties have been
studied, to various extents, in Hjort (1986), Sheather and Marron
(1990), and Cheng (1995), and we learn more in Section~7 below.
Interestingly, it can also be expressed as
$n^{-1}\sumin\be(y;i,n-i+1)\,x_{(i)}$, 
an even mixture of beta densities.


The posterior variance $\hatt V_a(y)$ may also be computed explicitly, via
$\E\{Q(y)^2\midd\data\}
   =\int_0^\infty\Pr\{|Q(y)|\ge x^{1/2}\midd\data\}\,\d x$, 
which as with other calculations above with some efforts also may be
expressed in terms of finite sums of explicit terms. One may show as
with Proposition \refloc{p3.1} that the posterior variance is finite
if and only if the prior variance is finite; this statement is valid
for each $n$. In the $a\arr0$ case the variance simplifies to 
\beq
\labelloc{eq:V0} 
\hatt V_0(y)=\sumin {n-1\choose i-1}y^{i-1}(1-y)^{n-i}\,
  \{x_{(i)}-\hatt Q_0(y)\}^2.
\eeq 
The posterior covariance between two quantiles can similarly be
estimated explicitly, via (\refloc{eq:pnbivariate}). With the
limiting normality results of Section~7 this implies for example
that $\hatt Q_0(y)\pm 1.96\,\hatt V_0(y)^{1/2}$ becomes an
asymptotic pointwise 95\% confidence band in the frequentist sense,
as well as an asymptotic pointwise 95\% credibility band in the
Bayesian posterior sense.


\begin{remark}
\labelloc{r3.2}
Note first that $X_{([nt])}$ is distributed as
$F_\true^{-1}(U_{([nt])})$, in terms of an ordered sample
$U_{(1)},\ldots,U_{(n)}$ from the uniform distribution
on the unit interval, in terms of the true distribution
$F_\true$ for the $X_i$s. Hence $X_{([nt])}$
is close to $F^{-1}(t)$ for moderate to large $n$.
A kernel type estimator based on the
order statistics would be of the form
$$\tilda Q(y)=\int K_h(t-y)X_{([nt])}\,\d t
        \doteq n^{-1}\sumin K_h(i/n-y)x_{(i)}, $$
in terms of a scaled kernel function $K_h(u)=h^{-1}K(h^{-1}u)$ and
its smoothing parameter $h$. One may now show, via approximate
normality of the binomial weights used in (\refloc{eq:Q0hat}), that
$\hatt Q_0(y)$ is asymptotically identical to such a kernel
estimator, with $K$ the standard normal kernel, and
$h=\{y(1-y)/n\}^{1/2}$; proving this is related to the classic de
Moivre--Laplace result. This means under-smoothing if compared to
the theoretically optimal bandwidths, which are of size
$O(n^{-1/3})$ for moderate to large $n$. 
See Sheather and Marron (1990).~\square 
\end{remark}

\section{Quantile density and probability density estimators}

Assume that the true $F=F_\true$ governing data has
a smooth density $f_\true$, positive on its support.
The quantile function $Q_\true(y)=F_\true^{-1}(y)$ has derivative
$q_\true(y)=1/f_\true(Q_\true(y))$,
sometimes called the quantile density function.
In this section we look at the relatives
$\hatt q_a$ and $\hatt f_a$ following from $\hatt Q_a$
of the previous section, with $a=0$ leading to
particularly interesting estimators.


First consider the quantile density.  
The Bayes estimator with the Dirichlet process prior
under squared error loss is, via results of Section~3,
after an exchange of derivative and mean operations,
\beqn
\hatt q_a(y)
&=&\int_0^\infty\be(y;aF_0(x)+nF_n(x),
   a\bar F_0(x)+n\bar F_n(x))\,\d x \\
& &\qquad
   +\int_{-\infty}^0\be(1-y;a\bar F_0(x)+n\bar F_n(x),
   aF_0(x)+nF_n(x))\,\d x.
\eeqn 
The limiting non-informative case $\hatt q_0=\hatt Q_0'$ can
be written in several revealing ways, from (\refloc{eq:Q0hat}) or as
a limit of the above; 
\beqn 
\hatt q_0(y) &=&\sumin {n-1\choose i-1}y^{i-1}(1-y)^{n-i}
        \Bigl({i-1\over y}-{n-i\over 1-y}\Bigr)\,x_{(i)} \\
&=&\int_{x_{(1)}}^{x_{(n)}}\be(y,nF_n(x),n\bar F_n(x))\,\d x
 =\sum_{i=1}^{n-1}(x_{(i+1)}-x_{(i)})\be(y,i,n-i).
\eeqn 
Note that there is no smoothing parameter in this
construction; the inherent smoothing comes `for free' through the
limiting Dirichlet process prior argument. The level of this
inherent smoothing is about $\{y(1-y)/n\}^{1/2}$, 
as per Remark \refloc{r3.2} above.


We have devised Bayesian ways of estimating $Q=F^{-1}$, 
and are free to invert back to the $F$ scale, 
finding in effect new estimators of the distribution function. 
Thus let $\hatt F_a(x)$ be the
solution to $x=\hatt Q_a(y)$. It can be computed from
(\refloc{eq:Qhatformula}). This is not the same as the posterior
mean or posterior median, but is a Bayes estimator in its own right,
with loss function of the form $L(F,\hatt F)=\int_0^1w(\hatt
Q-Q)^2\,\d y$. It is noteworthy that $\hatt F_a$ is smooth and
differentiable in $x$, unlike the posterior mean function
$\{aF_0(x)+nF_n(x)\}/(a+n)$, which has jumps at each data point.
When $a$ dominates $n$, $\hatt F_a$ is close to $F_0$. The case
$a=0$ is again of particular interest, with $\hatt F_0$ climbing
smoothly from zero at $x_{(1)}$ to one at $x_{(n)}$, with an
everywhere positive density over this data range. The $\hatt F_0$
may be considered a smoother default alternative to the empirical
distribution function $F_n$, for e.g.~display purposes. It follows
from theory of Section~7 that $\rootn(\hatt F_0-F_n)\arr_p0$, so the
two estimators are close.

It is well known that distribution functions chosen from the
Dirichlet prior are discrete with probability one.
Thus the random posterior quantile process is also discrete.
That the posterior mean of $Q(y)$ happens to be a smooth function
of $y$ is not a contradiction, however.
We have somehow `gained smoothness' by passing from $F$ to $Q$ and back
to $F$ again. This should perhaps be viewed as mathematical
happenstance; neither $F$ nor $Q$ is smooth,
but the posterior mean function of $Q$ is.


Our efforts also lead to new nonparametric Bayesian
density estimators. 
We solved $\hatt Q_a(y)=x$ to reach the estimator $\hatt F_a(x)$,
and its derivative $\hatt f_a(x)$ is a Bayes estimator
of the underlying data density $f_\true$.
The result is a continuous bridge in $a$,
from the prior guess $f_0$ for $a$ large to
something genuinely nonparametric and prior-independent
for $a=0$. One may contemplate devising methods
for choosing $a$ from data, smoothing between
prior and data, perhaps in empirical Bayesian
fashions, or via a hyperprior. Here we focus on
the automatic density estimator $\hatt f_0$,
corresponding to the non-informative prior.

From $\hatt f_0(x)=(\hatt Q_0^{-1})'(x)$ we may write 
\beq
\labelloc{eq:f0hat} 
\hatt f_0(x)=\Bigl[\sum_{i=1}^{n-1}(x_{(i+1)}-x_{(i)})
   \be(\hatt F_0(x);i,n-i)\Bigr]^{-1}, 
\eeq
where, for each $x$, the equation $\hatt Q_0(y)=x$
is numerically solved for $y$ to get $\hatt F_0(x)$,
for example using a Newton--Raphson method.
From smoothness properties of $\hatt F_0$ noted above,
one sees that $\hatt f_0(x)$ is strictly positive
on the exact data range $[x_{(1)},x_{(n)}]$,
with unit integral.

The formula above for $\hatt f_0(x)$ is directly valid
inside $(x_{(1)},x_{(n)})$.
At the end points some details reveal that
\beqn
\hatt f_0(x_{(1)})
   &=&1/\hatt q_0(0)=\{(n-1)(x_{(2)}-x_{(1)})\}^{-1}, \\
\hatt f_0(x_{(n)})
   &=&1/\hatt q_0(1)=\{(n-1)(x_{(n)}-x_{(n-1)})\}^{-1}. 
\eeqn
It is interesting and perhaps surprising
that this nonparametric Bayesian approach
leads to such explicit advice about the behaviour of $f$
near and at the endpoints;
estimation of densities in the tails is in general
a difficult problem with no clear favourite among
frequentist proposals.

It is perhaps too adventurous to struggle for the abolition of all
histograms, replacing them instead with the automatic Bayesian
non-informative density estimator $\hatt f_0$ of
(\refloc{eq:f0hat}). But as 
Figure~2 
illustrates, it can be a successful data descriptor, 
with better smoothness properties
than the histogram, and without the need for selecting smoothing
parameters. It also has the pleasant property that $\int x\hatt
f_0(x)\,\d x$ is precisely equal to the data mean $\bar x$. When
compared to traditional kernel methods it will be seen to smooth
less, actually with an amount corresponding to a locally varying
bandwidth of size $O(n^{-1/2})$, as opposed to the traditional
optimal size $O(n^{-1/5})$. The latter does assume two derivatives
of the underlying density, however, whereas the (\refloc{eq:f0hat})
estimator has been constructed directly from the data, without any
further smoothness assumptions.

\begin{figure} 
\labelloc{figure:f2}\center
\labelloc{fig:density}
\includegraphics[width=4.5in, height=3.0in, angle=0]{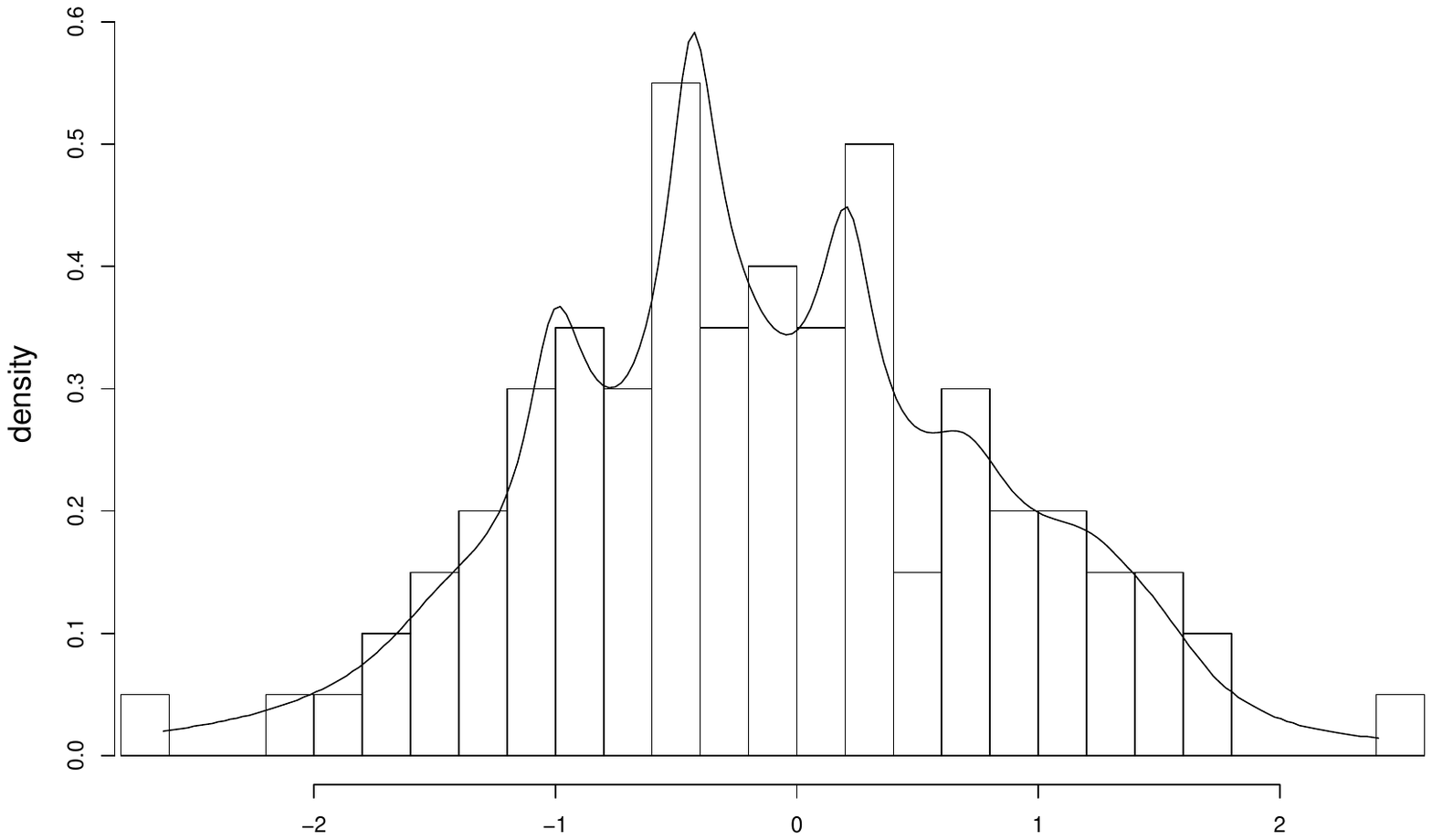}
\caption{
A histogram (with more cells than usual) over $n=100$ data
points from the standard normal, along with the automatic density
estimator of (\refloc{eq:f0hat}).}
\end{figure}

\def\ML{{\rm ml}}

\section{The Lorenz curve and the Gini index}

Quantile functions are used in many spheres of
theoretical and applied statistics. One such
is that of econometric studies of income
distributions, where information is often quantified
and compared in terms of the so-called Lorenz curve
(going back a hundred years, to Lorenz, 1905), along with
various summary measures, like the Gini index;
see e.g.~Aaberge (2001) and Aaberge, Bjerve and Doksum (2005).
This section considers nonparametric Bayes inference
for such curves and indices.


When the distribution $F$ of data is supported on
the positive halfline, the {\it Lorenz curve} 
is defined as
$$ L(y)=\int_0^y Q(u)\,\d u\Big/\int_0^1 Q(u)\,\d u
   \quad {\rm for\ }0\le y\le 1. $$
The numerator is also equal to $\int_0^{Q(y)}x\,\d F(x)$,
and the denominator is simply equal to the mean $\mu$ of the distribution.
It is in general convex, and is equal to the diagonal $L(y)=y$
if and only if the underlying distribution is concentrated
in a single point (perfect equality of income).

Bayesian inference can now be carried out for $L$,
for example through simulation of $Q$ curves from
the posterior distribution. A natural Bayes estimator
takes the form
$$ \hatt L_a(y)=\int_0^y\hatt Q_a(u)\,\d u\Big
   /\int_0^1\hatt Q_a(u)\,\d u, $$
stemming from keeping the weighted squared error loss function
for $Q$, transforming the solution to $L$ scale.
Particularly interesting is the non-informative limit version
$$\hatt L_0(y)={\int_0^y\hatt Q_0(u)\,\d u\over \int_0^1\hatt Q_0(u)\,\d u}
   =\Bigl\{n^{-1}\sumin \Be(y;i,n-i+1)x_{(i)}\Bigr\}\Big/\bar x
  \quad {\rm for\ }0\le y\le 1. $$


The {\it Gini index} is a measure of closeness of
the $L$ curve to the diagonal, i.e.~the egalitarian case,
and is defined as
$G=2\int_0^1\{y-L(y)\}\,\d y$.
With a Dirichlet prior for $F$ and any weighted
integrated squared error loss function for the
quantile function, we get a Bayes estimator
$\hatt G_a=2\int_0^1\{y-\hatt L_a(y)\}\,\d y$.
The non-informative limiting version is of particular interest.
Some algebra shows that 
$\hatt G_0=2\int_0^1\{y-\hatt L_0(y)\}\,\d y$
may be expressed as
\beqn
\hatt G_0
 =1-2{1\over n}\sumin\Bigl(1-{i\over n+1}\Bigr){x_{(i)}\over \bar x}
 =2{1\over n}\sumin{i\over n+1}{x_{(i)}\over \bar x}-1.
\eeqn
Its value may be supplemented with a credibility interval
via posterior simulation of $L$ curves.

\section{Doksum's shift and Parzen's comparison}

Assume data $X_1',\ldots,X_m'$ come from the distribution $G$,
independently of $X_1,\ldots,\allowbreak X_n$ from $F$.
When inspecting such data there are various options for portraying,
characterising and testing for differences
between the two distributions. 


Doksum (1974a) introduced the so-called {\it shift function}
$$D(x)=G^{-1}(F(x))-x. $$
Its essential property is that $X+D(X)$ has the same
distribution as $X'$. The shift function has a
particularly useful role in situations with
control and treatment groups. If the distributions
of $X$ and $X'$ differ only in location, for example,
then $D(x)$ is constant; if on the other hand $G$
is a location-and-scale translation of $F$, then $D(x)$ is linear.
Doksum (1974a) studied the natural nonparametric estimator
$\tilda D(x)=G_m^{-1}(F_n(x))-x$, in terms of the
empirical cumulative distributions $F_n$ and $G_m$;
see Section~7.3 below for its key large-sample properties.
Here we describe how Bayesian inference can be
carried out, starting with independent
priors $F\sim\Dir(aF_0)$ and $G\sim\Dir(bG_0)$.

The posterior distribution at a fixed $x$ is
$$K_{m,n}(t)=\Pr\{G^{-1}(F(x))-x\le t\midd\data\}
      =\Pr\{F(x)\le G(x+t)\midd\data\}, $$
which can be evaluated via numerical integration,
using the Beta distributions involved. For the
non-informative case,
\beqn
K_{m,n}(t)
&=&\Pr\{\Beta(nF_n(x),n\bar F_n(x))
   \le \Beta(mG_m(x+t),m\bar G_m(x+t))\} \\
&=&\int_0^1\Be(g,nF_n(x),n\bar F_n(x))
   \be(g,mG_m(x+t),m\bar G_m(x+t))\,\d g.
\eeqn
This can be used to compute the posterior median
estimator $K_{m,n}^{-1}(\half)$, along with a pointwise
credibility band, say $[K_{m,n}^{-1}(0.05),K_{m,n}^{-1}(0.95)]$.
It follows from results of Section~7 that such
a band will have frequentist coverage level
converging to the required 90\%, for each $x$,
when the sample sizes grow.

We also provide formulae for the posterior
mean and variance, for the non-informative case.
These are found by first conditioning on $F$, viz.
\beqn
\E\{G^{-1}(F(x))\midd\data,F\}
&=&\sum_{j=1}^m{m-1\choose j-1}F(x)^{j-1}\bar F(x)^{m-j}x_{(j)}', \\
\E\{G^{-1}(F(x))^2\midd\data,F\}
&=&\sum_{j=1}^m{m-1\choose j-1}F(x)^{j-1}\bar F(x)^{m-j}(x_{(j)}')^2.
\eeqn
Using Beta moment formulae this gives 
the Bayes estimator $\hatt D_0(x)$ as
\beqn
\sum_{j=1}^m{m-1\choose j-1}
   {\Gamma(n)\over \Gamma(nF_n)\Gamma(n\bar F_n)}
   {\Gamma(nF_n+j-1)\Gamma(n\bar F_n+m-j)
   \over \Gamma(n+m-1)}x_{(j)}'-x, 
\eeqn 
writing $F_n$ and $\bar F_n$ for $F_n(x)$ and $\bar F_n(x)$, 
while the posterior variance $\hatt V_0(x)$ can be found as
\beqn 
\sum_{j=1}^m{m-1\choose j-1}
   \!\!& &\!\!{\Gamma(n)\over \Gamma(nF_n)\Gamma(n\bar F_n)}
   {\Gamma(nF_n+j-1)\Gamma(n\bar F_n+m-j)
   \over \Gamma(n+m-1)}(x_{(j)}')^2 \\
& &\qquad\qquad\qquad\qquad -\,\{\hatt D_0(x)+x\}^2.
\eeqn
The theory of Section~7 guarantees that the
band $\hatt D_0(x)\pm1.645\,\hatt V_0(x)^{1/2}$
has pointwise coverage level converging to 90\%,
for example, as the sample sizes increase.

\begin{figure}
\labelloc{figure:f3}
\includegraphics[width=4.5in, height=3.0in, angle=0]{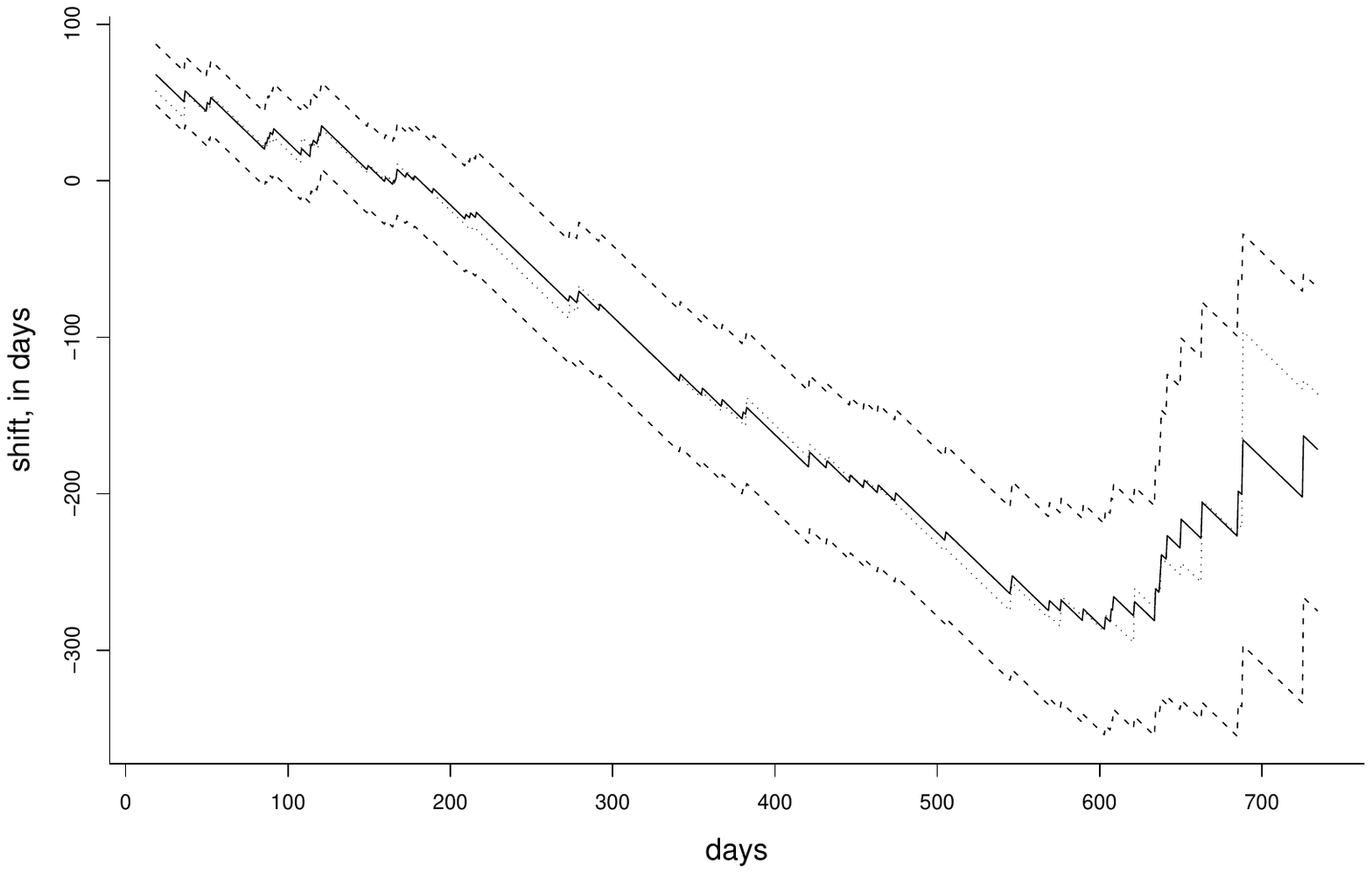}
\caption{
For the 65 guinea pigs in the control group
and the 60 in the treatment group, we display
the Bayes estimator [full line]
of the shift function associated with the
two survival distributions, alongside Doksum's
sample estimator [dotted line].
Also given is the approximate pointwise 90\%
credibility band.}
\end{figure}

Doksum (1974a) illustrated his shift function using survival data of
guinea pigs in Bjerkedal's (1960) study of the effect of virulent
tubercle bacilli, with 65 in the control group and 60 in the
treatment group, the latter receiving a dose of such bacilli. 
Here we re-analyse Bjerkedal and Doksum's data, with 
Figure~3
displaying the Bayes estimate $\hatt D_0(x)$, seen there to be quite
close to Doksum's direct estimate. Also displayed is the approximate
90\% pointwise confidence band. The figure illustrates dramatically
that the weaker pigs (those who tend to die early) will tend to have
longer lives with the treatment, while the stronger pigs (those
whose lives tend to be long) are made drastically weaker, i.e.~their
life lengths will decrease. This analysis agrees with conclusions in
Doksum (1974a). For example, pigs with life expectancy around 500
days can expect to live around 200 days less if receiving the
virulent tubercle bacilli in question.


Parzen (1979, 1982, 2002) has repeatedly advocated analysing and
estimating the function $\pi(y)=G(F^{-1}(y))$, which he terms 
the {\it comparison distribution}. 
This function, or estimates thereof, may be plotted 
against the identity function $\pi_\id(y)=y$ on the unit
interval; equality of the two distributions is equivalent to
$\pi=\pi_\id$. See also Newton's interview with Parzen (2002,
p.~372--374). We now consider nonparametric Bayesian estimation of
the Parzen curve via independent Dirichlet process priors on $F$ and
$G$, with parameters respectively $aF_0$ and $bG_0$.

A formula for the posterior mean $\hatt\pi(y)$
may be derived as follows. Let $\hatt G_m=w_m'G_0+(1-w_m')G_m$
be the posterior mean of $G$, in terms of $w_m'=b/(b+m)$
and the empirical distribution $G_m$ for the $m$ data points.
Then $\hatt\pi(y)$ is the mean of $\E\{G(Q(y))\midd Q,\data\}$,
i.e.~the mean of $\hatt G_m(Q(y))$ given data, leading to
\beqn
\hatt\pi(y)
&=&w_m'\E\{G_0(Q(y))\midd\data\}+(1-w_m')\E\{G_m(Q(y))\midd\data\} \\
&=&w_m'\int_0^1\Pr\{G_0(Q(y))>z\midd\data\}\,\d z \\
& &  \quad +\ (1-w_m'){1\over m}\sum_{j=1}^m\Pr\{x_j'\le Q(y)\midd\data\} \\
&=&w_m'\int_0^1\Be(y;(aF_0+nF_n)(G_0^{-1}(z)),
   (a\bar F_0+n\bar F_n)(G_0^{-1}(z)))\,\d z \\
& & \quad +\,(1-w_m'){1\over m}\sum_{j=1}^m\Be(y;(aF_0+nF_n)(x_j'-),
   (a\bar F_0+n\bar F_n)(x_j'-)),
\eeqn
where the second term is explicit and the first
not difficult to compute numerically.
If there are no ties between the $x_j'$ and the $x_i$ points
for the two samples, $(aF_0+nF_n)(x_j'-)$ is the same
as $(aF_0+nF_n)(x_j')$.
For the non-informative case of $a$ and $b$ both going to zero,
we have the particularly appealing estimator
$$\hatt\pi_0(y)=
{1\over m}\sum_{j=1}^m\Be(y;nF_n(x_j'-),n\bar F_n(x_j'-)). $$
Its derivative, which is an estimate of what Parzen terms
the comparison density $g(F^{-1}(y))\allowbreak/f(F^{-1}(y))$,
provided the densities $g=G'$ and $f=F'$ exist,
is quite simply
$(1/m)\sum_{j=1}^m\be(y;nF_n(x_j'-),n\bar F_n(x_j'-))$. 
The posterior variance of $\pi(y)$ may also be calculated
with some further efforts. For the non-informative case of
$a=b=0$, we find
\beqn
\lefteqn{\Var\{\pi(y)\midd\data\}
={1\over m+1}\hatt\pi_0(y)\{1-\hatt\pi_0(y)\}} \\
& & \qquad\qquad\quad 
   +\,{m\over m+1}\Bigl\{{1\over m^2}\sum_{j,k}\Be(y;nF_n(x_{j,k}'-),
   n\bar F_n(x_{j,k}'-))-\hatt\pi_0(y)^2\Bigr\},
\eeqn
in which $x_{j,k}'=\max(x_j',x_k')$.

It is seen that $\hatt\pi_0(y)$ provides a smoother
alternative to the direct nonparametric Parzen
estimator. The theory of Section~7 implies that
the two estimators are asymptotically equivalent,
and also that the simple credibility band
$\hatt\pi_0(y)\pm1.96\,\hatt\sd(y)$, with
$\hatt\sd(y)$ the posterior standard deviation
computed as above, is a band reaching 95\% level
coverage, in both the frequentist and Bayesian
settings, as sample sizes grow.


Laake, Laake and Aaberge (1985) discussed relations
between hospitalisation, as a measure of morbidity,
and mortality. The patient material consisted of
367 consecutive admissions at hospitals in Oslo
in 1980 (176 males and 191 females), while data
on mortality in Oslo consisted of 6140 deaths
(2989 males and 3151 females). Letting $F$ be
the distribution of age at hospitalisation and $G$
the distribution of age at death, Laake, Laake and Aaberge
suggested studying $\Lambda(y)=G^{-1}(y)-F^{-1}(y)$,
a direct comparison of the two quantile functions.
It is a close cousin of the Doksum curve in that
$\Lambda(F(x))=D(x)$.

We have re-analysed the data of Laake, Laake and Aaberge 
(1985, Table~1) using the Bayes estimator 
$\hatt\Lambda(y)=\hatt Q_G(y)-\hatt Q_F(y)$, 
with components as in (\refloc{eq:Q0hat}). 
For our illustration, we `made' continuous data from their table, 
by distributing the number of observations in question evenly over the
required age interval; thus 12 and 17 observed hospitalised women in
the age groups 50--54 and 55--59 gave rise to 12 and 17 $X$s spread
uniformly on the intervals $[49.5,54.5]$ and $[54.5,59.5]$, and so on. 
Figure~4 
presents these curves, for women and for men
separately, along with confidence band
$\hatt\Lambda(y)\pm1.96\,\hatt\sd(y)$, where $\hatt\sd(y)^2$ is the
sum of the two variance estimates involved, computed as in
(\refloc{eq:V0}). It follows from the theory of Section~7 that this
band indeed has the intended approximate 95\% confidence level at
each quantile value $y$. The analysis shows that to the first order
of approximation, and apart from noticeable deviations for the very
young and the very old, age at hospitalisation and age at death are
similar, with a constant shift between them, about seven years for
women and six years for men. This interpretation is in essential
agreement with conclusions reached by Laake, Laake and Aaberge.

\begin{figure}
\labelloc{figure:f4}
\includegraphics[width=4.5in, height=3.0in, angle=0]{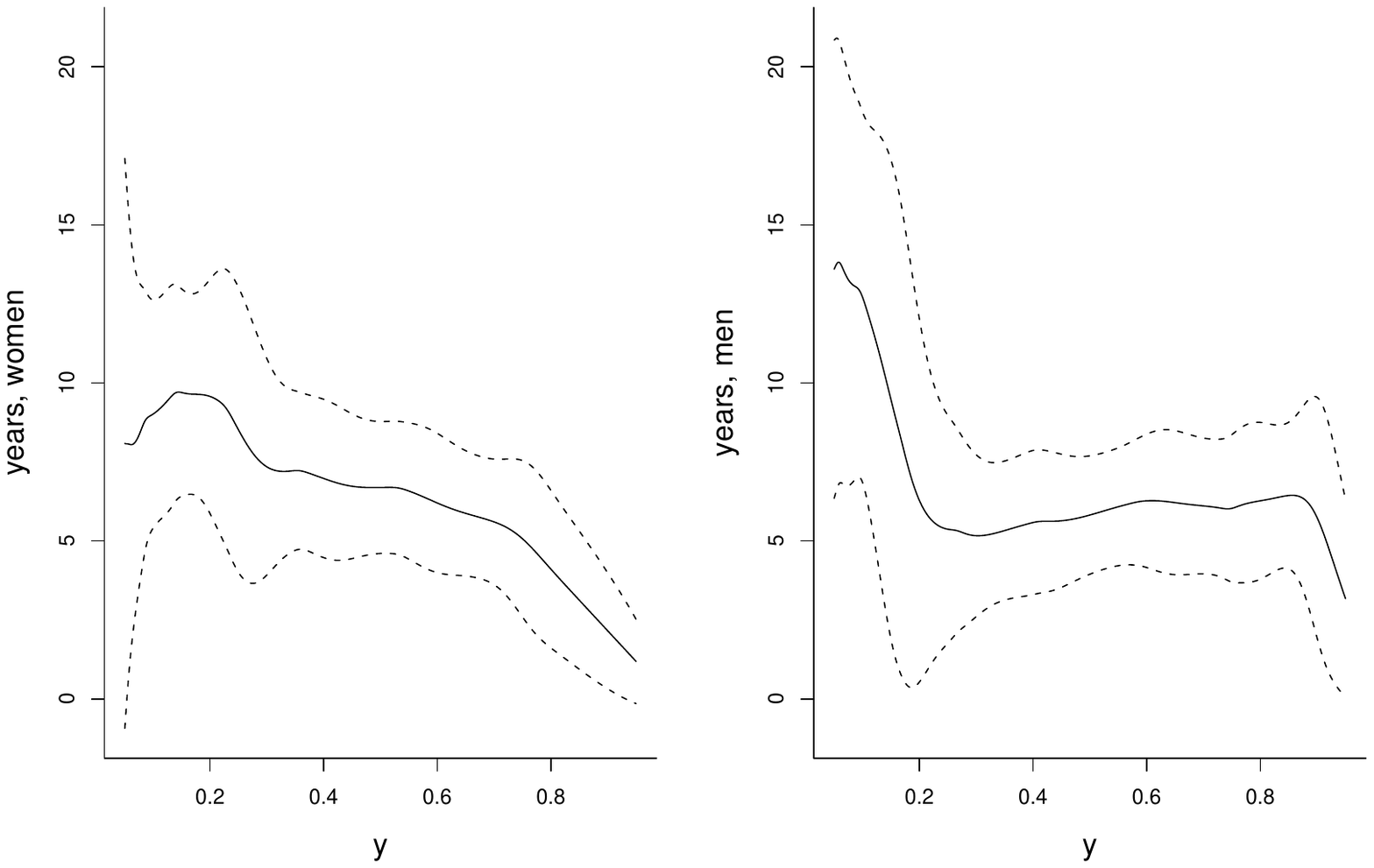}
\caption{
Estimated quantile difference $G^{-1}(y)-F^{-1}(y)$
between age at death distribution and age at hospitalisation
distribution, along with pointwise 95\% confidence bands,
for women (left) and for men (right).}
\end{figure}

\section{Large-sample analysis}

In this section we discuss large-sample behaviour of
the estimation schemes we have developed, from
both the Bayesian and frequentist perspectives.

\subsection{Nonparametric \Bernstein--von Mises theorems}

To set results reached below in perspective, it is useful
first to recall some well-known results about
the limiting behaviour of maximum likelihood and Bayes estimators,
as well as about the posterior distribution,
valid for general parametric models.
Specifically, assume i.i.d.~data $Z_1,\ldots,Z_n$ follow
a parametric density $g(z,\theta)$, with $\theta_\true$
the true parameter, and let $\hatt\theta_{\ML}$
and $\hatt\theta_B$ be the maximum likelihood and
posterior mean Bayes estimator under a suitable prior
$\pi(\d\theta)$. Then, under mild regularity conditions,
discussed e.g.~in Bickel and Doksum (2001, Ch.~5--6),
four notable results are valid:
(i) $\rootn(\hatt\theta_{\ML}-\theta_\true)\arr_d\N(0,J(\theta_\true)^{-1})$;
(ii) $\rootn(\hatt\theta_B-\hatt\theta_{\ML})\arr_p0$;
(iii) with probability one, the posterior distribution is such that
$\rootn(\theta-\hatt\theta_B)\midd\data\arr_d\N(0,J(\theta_\true)^{-1})$.
Here $J(\theta)$ is the information matrix of the model,
see e.g.~Bickel and Doksum (2001, Ch.~6).
With a consistent estimator $\hatt J$ of this matrix
one may compute the approximation $\N(\hatt\theta_{\ML},n^{-1}\hatt J)$
to the posterior distribution of $\theta$. Result (iv)
is that this simple method is first-order asymptotically correct,
i.e.~$\hatt J^{-1/2}(\theta-\hatt\theta_{\ML})\midd\data$
goes a.s.~to $\N(0,I)$, the implication being that
one may approximate the posterior distribution
without carrying out the Bayesian updating calculations as such.
Results of the (iii) and (iv) variety
are often called \Bernstein--von Mises theorems;
see e.g.~LeCam and Yang (1990, Ch.~7).
Note that Bayes and maximum likelihood estimators have
the same limit distribution, regardless also of the
prior one starts out with, as a consequence of (ii).

Such statements and results become more complicated in
non- and semiparametric models, and sometimes do not hold.
There are situation when Bayes solutions do not match
the natural frequentist estimators, and other situations where
the posterior distribution goes awry, or have a limit
different from that indicated by \Bernstein--von Mises
heuristics; see e.g.~Diaconis and Freedman (1986a, 1986b),
Hjort (1986, 1996, 2003).
For the present case of Dirichlet process priors there
are no such surprises, however, as long as inference
about $F$ is concerned, as one may prove the following.
Here the role of the maximum likelihood estimator is played
by the empirical distribution $F_n$, with Bayes estimator
(posterior mean) equal to $\tilda F_n=(a/(a+n))F_0+(n/(a+n))F_n$.
Below, $W^0$ is a Brownian bridge, i.e.~a Gau{\ss}ian
zero-mean process on $[0,1]$ with covariance structure
$t_1(1-t_2)$ for $t\le t_2$.

\begin{proposition}
\labelloc{p7.1}
Assume the Dirichlet process with
parameter $aF_0$ is used for the distribution of i.i.d.~data
$X_1,X_2,\ldots$, and assume that the real generating mechanism for
these observations is a distribution $F_\true$. Then (i) the process
$\rootn\{F_n(t)-F_\true(t)\}$ converges to $W^0(F_\true(t))$; (ii)
the difference $\rootn(\tilda F_n-F_n)$ goes to zero; and (iii) the
posterior distribution process $V_n(t)=\rootn\{F(t)-\tilda
F_n(t)\}\midd\data$ also converges, with probability one, to
$W^0(F_\true(t))$. The convergence is w.r.t.~the Skorokhod topology
in the space of right-continuous functions with left hand limits.
\end{proposition}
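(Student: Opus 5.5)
Part (i) is Donsker's theorem for the empirical process. Write $X_i=F_\true^{-1}(U_i)$ with $U_i$ i.i.d.\ uniform. Then $F_n(t)=G_n(F_\true(t))$, where $G_n$ is the uniform empirical distribution function. The classical result $\rootn(G_n-\id)\arr_d W^0$ in $D[0,1]$ transfers through the time change $t\mapsto F_\true(t)$, which is monotone and right-continuous. This gives convergence to $W^0(F_\true(t))$ in the Skorokhod sense; atoms of $F_\true$ just make the limit jump.

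Part (ii) is immediate. We have $\rootn(\tilda F_n-F_n)=\rootn\,a/(a+n)\,(F_0-F_n)$. Since $|F_0-F_n|\le1$, the supremum norm is at most $a\rootn/(a+n)\arr0$, deterministically.

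For part (iii) I would use a representation of the posterior Dirichlet. Write $F\mid\data\sim\Dir(aF_0+nF_n)$ as a normalised gamma process, $F(t)=\Gamma(t)/\Gamma(\RR)$. Here $\Gamma$ has independent increments, with $\Gamma(t)$ gamma distributed with shape $(aF_0+nF_n)(t)$ and scale 1. Split $\Gamma=\Gamma_a+\Gamma_n$ into independent parts with shape measures $aF_0$ and $nF_n$. The part $\Gamma_a$ has total mass $O_p(1)$, so after multiplying by $\rootn/n$ it contributes negligibly, and likewise its effect on the normalisation. This reduces the problem to the case $a=0$, where $F$ puts Dirichlet$(1,\ldots,1)$ weights $D_i=E_i/\sum_jE_j$ on the data points, with $E_i$ i.i.d.\ standard exponential (the Bayesian bootstrap). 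Then
$$\rootn\{F(t)-F_n(t)\}=\frac{n^{-1/2}\sumin(E_i-1)\{I(x_i\le t)-F_n(t)\}}{\bar E}.$$
By the strong law $\bar E\arr1$. Conditionally on the data, the numerator is an exchangeably weighted (multiplier) empirical process. I would invoke the conditional multiplier central limit theorem for the Donsker class of indicator functions, as in van der Vaart and Wellner (1996, Sect.~2.9): since $E_i-1$ has mean 0, variance 1 and finite $2+\delta$ moments, the process converges, for almost every data sequence, to a Gaussian process with covariance $F_\true(s\wedge t)-F_\true(s)F_\true(t)$, that is, to $W^0(F_\true(\cdot))$.

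The main obstacle is obtaining this almost sure conditional convergence in the function space, and not only for finite-dimensional marginals. If I want a self-contained route, I would do two things separately. For finite-dimensional convergence, the posterior vector of increments is Dirichlet with parameters $(aF_0+nF_n)(t_{j-1},t_j]$, and $n^{-1}$ times these parameters converge a.s.\ to $F_\true(t_{j-1},t_j]$ by Glivenko--Cantelli. The standard normal approximation of a Dirichlet vector with growing parameters then gives multivariate normal limits with the Brownian bridge covariance. For tightness, I would time-change to the uniform scale and use a fourth-moment increment bound of Billingsley type, $\E\{|V_n(s,t]|^2|V_n(t,u]|^2\}\le C\{\tilde\mu(s,u]\}^2$, computed from Dirichlet moments, with $\tilde\mu$ an a.s.\ asymptotically controlled measure. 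The atoms of $F_n$ are what makes this bound delicate, which is why I prefer the multiplier-CLT route. Finally, replacing $F_n$ by $\tilda F_n$ in the centring costs nothing by part (ii).
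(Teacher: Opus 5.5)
Your proof is correct, and your main line for (iii) takes a different route from the paper's. Parts (i) and (ii) are as in the paper.

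For (iii), the paper works by hand. It gets finite-dimensional convergence from a Dirichlet-to-normal lemma, applied on the Glivenko--Cantelli set $\Omega$. It gets tightness from an explicit fourth-moment formula for a Dirichlet triple, which yields $\E\{V_n(s,t]^2V_n(t,u]^2\midd\data\}\le 3\tilda F_n(s,t]\tilda F_n(t,u]$. Because this controlling measure is random and atomic, the paper must then adapt the \emph{proof} of Billingsley's Theorem~15.6 rather than the theorem itself. That is exactly the delicacy you flag, and your fallback route is essentially the paper's proof.

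Your primary route splits the posterior gamma process into a prior part and a data part. The prior part has a law free of the data and total mass $O_p(1)$, so it costs only $O_p(n^{-1/2})$ uniformly. This reduces (iii) to the Bayesian bootstrap, where the conditional multiplier CLT for the Donsker class of half-line indicators does the work.

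What your route buys:
\begin{itemize}
\item almost sure conditional convergence in sup norm, which is stronger than the stated result;
\item no separate tightness computation;
\item automatic handling of atoms of $F_\true$ and ties in the data.
\end{itemize}
State explicitly that sup-norm convergence gives the Skorokhod statement, since $d_{\rm Sk}\le\|\cdot\|_\infty$ and the limit is tight.

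What it costs:
\begin{itemize}
\item reliance on a heavy external theorem;
\item a reduction that needs $\Gamma_a(\RR)/\rootn\arr_p0$. It therefore covers $a=o(\rootn)$ but not the $a=cn$ regime the paper mentions after the proof, where the direct Dirichlet computations still apply.
\end{itemize}

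One small point: invoking the strong law for $\bar E$ presupposes a single i.i.d.\ weight sequence shared across $n$. That coupling is harmless, since only the conditional law at each $n$ matters; alternatively, the weak law with Slutsky suffices.
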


\begin{proof}
The first result is classic and may be found in
e.g.~Billingsley (1968, Ch.~4). The second statement
is immediate from the explicit representation of $\tilda F_n$.
Proving the third involves showing finite-dimensional
convergence in distribution and tightness, as per
the theory of convergence of probability measures
laid out in e.g.~Billingsley (1968).

To show finite-dimensional convergence we start with
$t_1<\cdots<t_m$ and work with differences
$\Delta V_{n,j}=\rootn\{F(t_{j-1},t_j]-\tilda F_n(t_{j-1},t_j]\}$.
The vector of $D_j=F(t_{j-1},\allowbreak t_j]$
has a Dirichlet distribution
with parameters $(n+a)\tilda F_n(t_{j-1},t_j]$.
Also, on a set $\Omega$ of probability one,
both $F_n$ and $\tilda F_n$ tend uniformly to $F_\true$,
by the Glivenko--Cantelli theorem. Finishing this part
of the proof is therefore more or less equivalent to the
following lemma: If $(U_1,\ldots,U_m)$ is a Dirichlet
distributed vector with parameters $(kp_1,\ldots,kp_m)$,
where $p_1+\cdots+p_m=1$, then the vector with components
$(k+1)^{1/2}(U_j-p_j)$ tends with growing $k$ to a multinormal
vector with mean zero and `multinomial' covariance structure
$p_i(\delta_{i,j}-p_j)$, writing $\delta_{i,j}=I_{\{i=j\}}$.
Proving this 
can be done via Scheff\'e's theorem
on convergence of densities,
or more easily via the representation $U_j=G_j/(G_1+\cdots+G_m)$
in terms of independent $G_j\sim{\rm Gamma}(kp_j,1)$,
and for which one quickly establishes that $k^{1/2}(G_j/k-p_j)$
tends to a normal $(0,p_j)$.

It remains to demonstrate the almost sure tightness of $V_n$.
For this purpose, take first $(U,V,W)$ to be Dirichlet with
parameter $(kp,kq,kr)$, where $p+q+r=1$. Then some fairly
long calculations show that
$$\E(U-p)^2(V-q)^2={pq\over (k+1)(k+2)(k+3)}\{k-(k-6)(p+q-3pq)\}. $$
Applying this to the posterior process,
writing $V_n(s,t]=V_n(t)-V_n(s)$ and so on, shows that
$\E \{V_n(s,t]^2V_n(t,u]^2\midd\data\}$
is bounded by $3\tilda F_n(s,t]\tilda F_n(t,u]$, 
with the right hand side converging, under $\Omega$,
towards a quantity bounded by $3\,F_\true(s,u]^2$.
Tightness now follows from the proof of Theorem 15.6
(but not quite by Theorem 15.6 itself) in Billingsley (1968).
\end{proof}

The result above was also in essence proved in Hjort (1991),
and is also related to large-sample studies of
the Bayesian bootstrap, see e.g.~Lo (1987).
We also note that $(n+a+1)^{1/2}$ is a somewhat
superior scaling, compared to $\rootn$,
giving exactly matched first and second moments
for the posterior process.

We further note that the above conclusions hold also
when the strength parameter $a$ of the prior is allowed
to grow with $n$, as long as $a/\rootn\arr0$.
In the more drastic case when $a=cn$, say,
the frequentist and Bayesian schemes do not agree
asymptotically, as $\tilda F_n$ goes a.s.~to
$F_\infty=(c/(c+1))F_0+(1/(c+1))F_\true$. But the
arguments regarding (iii) still go through, showing
that the posterior distribution of $(n+a+1)^{1/2}(F-\tilda F_n)$
tends a.s.~to that of $W^0(F_\infty(\cdot))$.

\subsection{Behaviour of the posterior quantile process}

Here we aim at obtaining results as above for the
quantile processes involved. For the quantiles,
the natural frequentist estimator is $F_n^{-1}$,
while several Bayesian schemes may be considered,
including $\tilda F_n^{-1}$ and the posterior mean
function $\hatt Q_a(y)$ and its natural non-informative
limit $\hatt Q_0(y)$.

\begin{proposition}
\labelloc{p7.2}
Assume, in addition to conditions listed in Proposition
\refloc{p7.1}, that the $F_\true$ distribution has a positive and
continuous density $f_\true$, and let $Q_\true(y)$ and
$q_\true(y)=1/f_\true(Q_\true(y))$ be the true quantile and quantile
density functions. Then (i) the process
$\rootn\{F_n^{-1}(y)-Q_\true(y)\}$ tends to $q_\true(y)W^0(y)$; (ii)
the difference $\rootn\{F_n^{-1}(y)-\tilda F_n^{-1}(y)\}$ goes to
zero in probability; and (iii) the posterior distribution process
$\rootn\{Q(y)-\tilda F_n^{-1}(y)\}\midd\data$ converges a.s.~to the
same limit $q_\true(y)W^0(y)$. The convergence takes place in each
of the spaces $D[\eps,1-\eps]$ of left-continuous functions with
right-hand limits, equipped with the Skorokhod topology, where
$\eps\in(0,\half)$. 
\end{proposition}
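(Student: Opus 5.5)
The plan is to deduce all three statements from Proposition \refloc{p7.1} through the functional delta method for the inverse map $\phi\colon F\mapsto F^{-1}$. We use the standard fact (Vervaat's lemma, or the Hadamard differentiability of inversion tangentially to continuous functions) that $\phi$ is Hadamard differentiable at $F_\true$ as a map into $D[\eps,1-\eps]$. Here $F_\true$ has a positive continuous density on a neighbourhood of $[Q_\true(\eps),Q_\true(1-\eps)]$. The derivative is $h\mapsto -h(Q_\true(\cdot))/f_\true(Q_\true(\cdot))=-q_\true(\cdot)h(Q_\true(\cdot))$, and it holds along sequences $h_n\to h$ with $h$ continuous. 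Since $W^0(F_\true(Q_\true(y)))=W^0(y)$ and $-W^0$ has the same law as $W^0$, the limits come out as $q_\true(y)W^0(y)$.

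For (i), we apply the delta method directly to $\rootn(F_n-F_\true)\to W^0(F_\true)$ from Proposition \refloc{p7.1}(i). The limit has continuous paths, so the tangential condition is met.

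For (ii), we write $\tilda F_n-F_n=(a/(a+n))(F_0-F_n)$, so $\|\tilda F_n-F_n\|_\infty\le a/(a+n)=O(1/n)$. We then use the elementary inversion inequality: if $\|G-H\|_\infty\le\delta$, then $G^{-1}(y)\in[H^{-1}(y-\delta),H^{-1}(y+\delta)]$. It therefore suffices to show that $\rootn\{F_n^{-1}(y\pm\delta_n)-F_n^{-1}(y)\}\to0$ uniformly on $[\eps,1-\eps]$ with $\delta_n=a/(a+n)$. This follows from (i), since the limit process is continuous and $q_\true$ is uniformly continuous there, together with the deterministic term $\rootn\{Q_\true(y\pm\delta_n)-Q_\true(y)\}=O(n^{-1/2})$. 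This is an asymptotic equicontinuity argument for the quantile process.

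For (iii), the key is a conditional (almost sure) delta method. On the probability-one set $\Omega$ of Proposition \refloc{p7.1}, we have $\tilda F_n\to F_\true$ uniformly and $V_n=\rootn(F-\tilda F_n)\mid\data\to W^0(F_\true)$ in distribution. By Skorokhod representation for each fixed $\omega\in\Omega$, we may realise versions with $V_n\to V$ almost surely, with $V$ continuous. The remaining task, and the main obstacle, is that the centring distribution $\tilda F_n$ moves with $n$, so we need Hadamard differentiability uniformly along the sequence. Concretely, we need
$$\rootn\{(\tilda F_n+n^{-1/2}h_n)^{-1}-\tilda F_n^{-1}\}\to -q_\true h(Q_\true)$$
uniformly on $[\eps,1-\eps]$ whenever $h_n\to h$ continuous. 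Because $\tilda F_n$ is not smooth (it contains the jumps of $F_n$ of size $1/(n+a)$), we cannot use the derivative of $\tilda F_n$ directly. Instead we would write $\tilda F_n=F_\true+n^{-1/2}g_n$, where $g_n=\rootn(\tilda F_n-F_\true)$. Under $\Omega$ we do not have $g_n$ convergent, so we enlarge $\Omega$ as follows. By (i) and the Skorokhod representation in the data space, $g_n\to g$ uniformly with $g$ continuous. Alternatively, and better, we use only the modulus of continuity of $g_n$: the law of the iterated logarithm for the oscillation of the empirical process gives a.s. $\sup_{|s-t|\le\delta}|g_n(s)-g_n(t)|\to0$ when $\delta=O(n^{-1/2}\log n)$. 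With this, both $(F_\true+n^{-1/2}(g_n+h_n))^{-1}$ and $(F_\true+n^{-1/2}g_n)^{-1}$ are expanded by the Vervaat-type argument. Their difference is $-q_\true\,h_n(Q_\true)+o(1)$, because the $g_n$ terms are evaluated at points within $O(n^{-1/2})$ of each other and cancel by the small-oscillation property.

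Finally, $Q=F^{-1}$ and $\tilda F_n^{-1}$ are exactly these inverses, so this yields (iii) almost surely. We expect the oscillation control of $g_n$ to be the delicate step.
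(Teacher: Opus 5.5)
Your proposal is correct in substance and has the same skeleton as the paper's proof. All three parts are a functional delta method for $F\mapsto F^{-1}$: the paper takes it from Doss and Gill (1992), using Theorem~1 for (ii) and a ``slightly extended'' Theorem~2 on the set $\Omega$ for (iii), while you get it from Hadamard differentiability of inversion. The differences are in execution.

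For (ii), your sandwich $F_n^{-1}(y-\delta_n)\le\tilda F_n^{-1}(y)\le F_n^{-1}(y+\delta_n)$ with $\delta_n=a/(a+n)$, combined with asymptotic equicontinuity of the quantile process from (i), is more elementary than the paper's device of treating $\tilda F_n$ as a vanishing $n^{-1/2}$-perturbation of $F_n$.

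For (iii), you spell out what the paper's ``slight extension'' actually has to contain. Glivenko--Cantelli convergence $\tilda F_n\to F_\true$ alone does not give differentiability of inversion along the moving centre $\tilda F_n$: a centre with a flat stretch of length $n^{-1/3}$ near level $y$ is compatible with uniform convergence, yet makes the rescaled difference blow up. The probability-one event you actually need is the intersection of two a.s.\ statements. The first is the bound on the oscillation modulus of $\rootn(\tilda F_n-F_\true)$ at scale $n^{-1/2}\log n$ (Stute's oscillation theorem). The second is the Chung--Smirnov law of the iterated logarithm, $\|\rootn(\tilda F_n-F_\true)\|_\infty=O(\sqrt{\log\log n})$.

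There are two cautions on your (iii).

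First, drop your first option, the Skorokhod representation in the data space. It changes the joint law of the data sequence across $n$, so it would deliver (iii) only in probability, not for almost every sample sequence.

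Second, do the final step by comparing $\xi_n=F^{-1}(y)$ and $\zeta_n=\tilda F_n^{-1}(y)$ directly, not by expanding each inverse separately around $Q_\true(y)$. Subtract $F(\xi_n)=y+o(n^{-1/2})$ and $\tilda F_n(\zeta_n)=y+O(n^{-1})$, and apply the mean value theorem to $F_\true$ between $\xi_n$ and $\zeta_n$. This gives $\rootn f_\true(\eta_n)(\xi_n-\zeta_n)=-h_n(\xi_n)-\{g_n(\xi_n)-g_n(\zeta_n)\}+o(1)$, where only the oscillation term of $g_n$ appears. If you expand separately, you are left with the cross term $g_n(\zeta_n)\{1/f_\true(\eta_n)-1/f_\true(\eta_n')\}$ at two different mean-value points. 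Since $\|g_n\|_\infty$ reaches order $\sqrt{\log\log n}$ infinitely often, this term need not vanish when $f_\true$ is only assumed continuous, as in the statement.
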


\begin{proof}
The first result is again classic, see e.g.~Shorack and Wellner
(1986, Ch.~3). It is typically proven by tending to
the uniform case first, involving say $F_{n,\rm unif}^{-1}(y)$,
and then applying the delta method using the representation
$F_n^{-1}(y)=Q_\true(F_{n,\rm unif}^{-1}(y))$.
Results (ii) and (iii) may be proven in different ways,
but the apparently simplest route is via the method
devised by Doss and Gill (1992), which acts as a
functional delta method operating on the inverse functional
$F\mapsto Q=F^{-1}$. We saw above that
$\rootn\{F(t)-\tilda F_n(t)\}\midd\data$ tends a.s.~to
$V(t)=W^0(F_\true(t))$. From a slight extension of
Doss and Gill's Theorem 2, employing the set $\Omega$
of probability 1 encountered in the previous proposition,
follows that $\rootn\{Q(y)-\hatt F_n^{-1}(y)\}\midd\data$
must tend a.s.~to the process $-V(Q_\true(y))/f_\true(Q_\true(y))$,
which is the same as $-q_\true(y)W^0(y)$. This proves
(iii), since by symmetry $W^0$ and $-W^0$ have identical
distributions. Statement (ii) follows similarly
from Doss and Gill (op.~cit., Theorem~1), again with
the slight extension to secure an `almost sure' version
rather than an `in probability' version,
since the process $\rootn(F_n-\tilda F_n)$
has the zero process as its limit.
\end{proof}

\begin{remark} 
\labelloc{r7.1} 
We also note that $\rootn(\hatt Q_a-\hatt
Q_0)\arr_p0$ follows, by the same type of arguments, starting from
$\rootn(\tilda F_n-F_n)\arr_p0$. In particular, different Bayesians
using different Dirichlet process priors will all agree
asymptotically. Also, the two estimators $\hatt Q_0$ (the
\Bernstein{} smoothed quantiles) and $F_n^{-1}$ (the direct
quantiles) become equivalent for large samples, in the sense of
$\rootn(\hatt Q_0-F_n^{-1})\arr_p0$. This also follows from work of
Sheather and Marron (1990) about kernel smoothing of quantile
functions; see also Cheng (1995).~\square
\end{remark}

An important consequence of the proposition is that the posterior
variance of $\rootn(Q-F_n^{-1})$ tends to the variance of $q_\true
W^0$. This is valid for each Dirichlet strength parameter $a$, as
$n\arr\infty$. For $a=0$, $n$ times the posterior variance $\hatt
V_0(y)$ of (\refloc{eq:V0}) converges a.s.~to $q_\true(y)^2y(1-y)$.
This fact, which may also be proved via results of Conti (2004), is
among the ingredients necessary to secure that the natural
confidence bands $\hatt Q_0\pm z_0\,\hatt V_0^{1/2}$ have the
correct limiting coverage level. This comment also applies to
constructions in the following subsection.

\subsection{Doksum's shift and Parzen's comparison}

Here we first state results for the natural
nonparametric estimators $\tilda D(x)$ and $\tilda\pi(y)$
of Doksum's shift function $D(x)$ and Parzen's comparison
distribution, respectively, before we go on to describe
the behaviour of their Bayesian cousins, introduced in
Section~6. For data $X_1,\ldots,X_n$ from $F_\true$
and $X_1',\ldots,X_m'$ from $G_\true$, let again
$F_n$ and $G_m$ be the empirical distribution functions.
We write $N=n+m$ and assume that $n/N\arr c$ and $m/N\arr 1-c$
as the sample sizes increase. Here $F_\true$ and $G_\true$
are the real underlying distributions, for which
we used Dirichlet process priors $\Dir(aF_0)$ and $\Dir(bG_0)$
in Section~6.

The Doksum estimator is $\tilda D(x)=G_m^{-1}(F_n(x))-x$. Some
analysis, involving the frequentist parts of Propositions
\refloc{p7.1} and \refloc{p7.2}, shows that the $N^{1/2}\{\tilda
D(x)-D_\true(x)\}$ process tends to 
\beq 
\labelloc{eq:asDoksum}
(G_\true^{-1})'(F_\true(x))\hskip-14pt
   &&\{(1-c)^{-1/2}W^0_1(F_\true(x))+c^{-1/2}W^0_2(F_\true(x))\} \nonumber \\
&=&\{c(1-c)\}^{-1/2}(G_\true^{-1})'(F_\true(x))W^0(F_\true(x)),
\eeq
where $D_\true(x)=G_\true^{-1}(F_\true(x))-x$
and $W^0_1$ and $W^0_2$ are two independent Brownian
bridges; these combine as indicated into one such
Brownian bridge $W^0$. This result was given in
Doksum (1974a), and underlies various methods for
obtaining pointwise and simultaneous confidence
bands for $D(x)$; see also Doksum and Sievers (1976).

Arguments used to reach the limit result above may now be repeated
mutatis mutandis, in combination with the \Bernstein--von Mises
results in Propositions \refloc{p7.1}--\refloc{p7.2}, to reach
\begin{equation}
\labelloc{eq:asDoksum2} N^{1/2}\{D(x)-\tilda D(x)\}\midd\data\arr_d
Z_D(x),
\end{equation}
say, using $Z_D$ to denote the limit process in
(\refloc{eq:asDoksum}). The convergence takes place in each
Skorokhod space $D[a,b]$ over which the underlying densities
$f_\true$ and $g_\true$ are positive, and holds with probability 1,
i.e.~for almost all sample sequences. Result (\refloc{eq:asDoksum2})
is valid for the informative case with $a$ and $b$ positive (but
fixed) as well as for the limiting case where
$F\midd\data\sim\Dir(nF_n)$ and $G\midd\data\sim\Dir(mG_m)$. It is
also valid with $\tilda D(x)$ replaced by either the posterior mean
$\hatt D_0(x)$ or posterior median $K_{m,n}^{-1}(\half)$ estimators
discussed in Section~6.

Similarly, the nonparametric Parzen estimator is
$\tilda\pi(y)=G_m(F_n^{-1}(y))$, and a decomposition into two
processes shows with some analysis that 
$N^{1/2}\{\tilda\pi(y)-\pi_\true(y)\}$ tends
to the process 
\beq 
\labelloc{eq:asParzen}
Z_P(y)
&=&{1\over (1-c)^{1/2}}W^0_1(G_\true(F_\true^{-1}(y)))
    +{1\over c^{1/2}}{g_\true(F_\true^{-1}(y))
   \over f_\true(F_\true^{-1}(y))}W^2_0(y) \nonumber \\
\quad &=&(1-c)^{-1/2}W^0_1(\pi_\true(y))+c^{-1/2}\pi_\true'(y)
   W^0_2(y),
\eeq
with $\pi_\true(y)=G_\true(F_\true^{-1}(y))$.
For the case $F_\true=G_\true$, one has $\pi_\true(y)=y$,
and the limit result translates to the quite simple
$(mn/N)^{1/2}(\tilda\pi-\pi)\arr_dW^0$.
This provides an easy and informative way
of checking and testing proximity of two distributions
via the $\tilda\pi$ plot.
``Why aren't people celebrating these facts?'',
as says Parzen in the interview with Newton (2002, p.~373).
Similarly worthy of celebrations, in the Bayesian camp, should be
the fact that (\refloc{eq:asParzen}) has a sister parallel in the
present context, namely that 
$N^{1/2}\{\pi(y)-\hatt\pi(y)\}\midd\data$
tends to the same limit process as in (\refloc{eq:asParzen}).
Here $\hatt\pi(y)$ can be the posterior median estimator or
the posterior mean estimator found in Section~6.

\section{Quantile regression}

Consider the regression situation where certain covariates
$(x_{i,1},\ldots,x_{i,p})^\tr=x_i$ are available for individual $i$,
thought to influence the distribution of $Y_i$. Assume that
$Y_i=\beta^\tr x_i+\sigma\eps_i$, where
$\beta=(\beta_1,\ldots,\beta_p)^\tr$ contains unknown
regression parameters and $\eps_1,\ldots,\eps_n$
are independent error terms, coming from a scaled
residual distribution $F$. Thus a prospective observation
$Y$, with covariate information $x$,
will have distribution $F(t\midd x)=F((t-\beta^\tr x)/\sigma)$,
conditional on $(\beta,\sigma,F)$. Its quantile function becomes
$Q(u\midd x)=\beta^\tr x+\sigma\,Q(u)$,
writing again $Q$ for $F^{-1}$.

The problem to be discussed now is that of Bayesian inference for
$Q(u\midd x)$, starting out with a prior for $(\beta,\sigma,F)$.
Take $(\beta,\sigma)$ and $F$ to be independent, with
a prior density $\pi(\beta,\sigma)$ and a $\Dir(aF_0)$
prior for $F$, where the prior guess $F_0$ has a density $f_0$.
The posterior distribution of $(\beta,\sigma,F)$ may then
be described as follows.
First, the posterior density of $\beta$ can be shown to be
$$\pi(\beta,\sigma\midd\data)={\rm const.}\,\pi(\beta,\sigma)
  \prod_{\rm distinct} f_0((y_i-\beta^\tr x_i)/\sigma), $$
where the product is taken over distinct values of $y_i-\beta^\tr x_i$.
This may be shown via techniques in Hjort (1986).
Secondly, given data and $(\beta,\sigma)$,
$Q$ acts as the posterior quantile
process from a Dirichlet $F$ with parameter
$aF_0+\sumin\delta((y_i-\beta^\tr x_i)/\sigma)$,
with $\delta(z)$ denoting unit point mass at $z$;
in particular, expressions for
$\hatt Q_a(u\midd\beta,\sigma)=\E\{Q(u)\midd\beta,\sigma,\data\}$
may be written down using the results of earlier sections.

In combination, this gives for each $x_0$
an estimator for $Q(u\midd x_0)$ of the form
\beqn
\hatt Q_a(u\midd x_0)
&=&\E\{\beta^\tr x_0+\sigma Q(u)\midd\data\} \\
&=&\hatt\beta^\tr x_0+\E\{\sigma\hatt Q_a(u\midd\beta,\sigma)\midd\data\} \\
&=&\hatt\beta^\tr x_0+\int \sigma\hatt Q_a(y\midd\beta,\sigma)
  \pi(\beta,\sigma\midd\data)\,\d\beta\,\d\sigma,
\eeqn
where $\hatt\beta$ is the posterior mean of $\beta$.
For the particular case of $a$ tending to zero, this gives
\beqn
\hatt Q_0(u\midd x_0)
=\hatt\beta^\tr x_0
   +\sumin {n-1\choose i-1}u^{i-1}(1-u)^{n-i}\,e_i.
\eeqn
Here $e_i=\int (y-\beta^\tr x)_{(i)}\pi(\beta\midd\data)\,\d\beta$,
where, for each $\beta$, $(y-\beta^\tr x)_{(i)}$ is the
result of sorting the $n$ values of $y_j-\beta^\tr x_j$ and
then finding the $i$th ranked one. 
The simplest implementation might be to draw a large number of
$\beta$s from the posterior density,
and then for each of these sort the values of
$y_j-\beta^\tr x_j$. Averaging over all simulations
then gives $e_i$ as the posterior mean of $(y-\beta^\tr x)_{(i)}$,
for each $i=1,\ldots,n$, and in their turn
$\hatt Q_0(u\midd x_0)$ for all $x_0$.

One may also give a separate recipe for making
inference for $Q$, the residual quantile process.
Other Bayesian approaches to quantile regression
are considered in Kottas and Gelfand (2001)
and Hjort and Walker (2006).

\section{Concluding remarks}

In our final section we offer some concluding
comments, some of which might point to further
problems of interest.

\medskip
{\it Other priors.}
There are of course other possibilities for quantifying
prior opinions of quantile functions. One may e.g.~start
with a prior more general than or different from the
Dirichlet process for $F$, like Doksum's (1974b)
neutral to the right processes, 
or mixtures of Dirichlet processes, 
and attempt to reach results
for the consequent quantile processes $Q=F^{-1}$.
Another and more direct approach is
via the versatile class of quantile pyramid processes
developed in Hjort and Walker (2006). These work by first
drawing the median $Q(\half)$ from a certain distribution;
then the two other quartiles $Q({1\over 4})$ and $Q({3\over 4})$
given the median; then the three remaining octiles
$Q({j\over 8})$ for $j=1,3,5,7$; and so on.
The Dirichlet process can actually be seen to be a special
case of these pyramid constructions.
While the treatment in Hjort and Walker leads to recipes
which can handle the prior to posterior updating task
for any quantile pyramid, this relies on simulation techniques
of the McMC variety. Part of the contribution of the
present chapter is that explicit formulae and characterisations
are developed, partly obviating the need for such simulation
work, for the particular case of the Dirichlet processes.

\medskip
{\it An invariance property.}
Our canonical Bayes estimator (\refloc{eq:Q0hat}) was derived by
starting with a $\Dir(aF_0)$ prior for $F$ and then letting $a$ go
to zero. Extending the horizon beyond the simple i.i.d.~setting,
suppose for illustration that data are assumed to be of the form
$X_i=\xi+\sigma Z_i$, with $Z_i$ having distribution $G$. One may
then give a semiparametric prior for the distribution
$F(t)=G((t-\xi)/\sigma)$ of $X_i$, with a prior for $(\xi,\sigma)$
and an independent $\Dir(aG_0)$ prior for $G$. This leads to a more
complicated posterior distribution for $Q(y)=\xi+\sigma Q_G(y)$,
say. But since $G$ given data and the parameters is a Dirichlet with
parameter $aG_0+\sumin\delta((x_i-\mu)/\sigma)$, results of
Sections~2 and 3 give formulae for
$\E\{Q(y)\midd\data,\xi,\sigma\}$. For the non-informative case of
$a=0$,
$$\E\{Q(y)\midd\data,\xi,\sigma\}
  =\xi+\sigma\sumin{n-1\choose i-1}y^{i-1}(1-y)^{n-1}
   {x_{(i)}-\xi\over \sigma}. $$
But the extra parameters cancel out, showing that the posterior mean
is again the (\refloc{eq:Q0hat}) estimator, which therefore is the
limiting Bayes rule for rather wider classes of priors than only the
pure Dirichlet. The argument goes through for each monotone
transformation $X_i=a_\theta(Z_i)$ with a prior for $(\theta,G)$.

In situations where the Lorenz curve and Gini index are of
interest, for example, one might think of data as
$X_i=\theta Z_i$, with separate priors for $\theta$
and the distribution $G$ of $Z_i$. The above argument
shows that the $\theta$ information is not relevant
for $Q(y)=\theta Q_G(y)$, when $a$ is small, thus lending further
support to the estimators $\hatt L_0$ and $\hatt G_0$
of Section~5.

\medskip
{\it Alternative proofs.}
There are other venues of interest towards proving Proposition
\refloc{p7.2} or other versions thereof. Johnson and Sim (2006) give
a different proof of the large-sample joint normality of a finite
number of posterior quantiles, including asymptotic expansions.
Conti (2004) has independently of the present authors reached
results for the posterior process $\rootn(Q-\tilda F_n^{-1})$,
partly using strong Hung{\H a}rian representations. His approach
gives results that are more informative than Proposition
\refloc{p7.2} concerning the boundaries, i.e.~for $y$ close to 0 and
$y$ close to 1, where our direct method works best on
$D[\eps,1-\eps]$ for a fixed small $\eps$.
Another angle is to exploit approximations to 
the Beta and Dirichlet distributions
associated with the random $F$ and turn these around 
to good approximations for $Q$. 
%
A third possibility of interest is to express the random posterior
quantile process as $Q(y)=x_{(N(y))}$, with $N(y)$ the random
process described in Section~2.4, climbing from $1$ at zero to $n$
at one. One may show that $\rootn\{N(y)/n-y\}$ tends to a Brownian
bridge, and couple this with $Q(y)=Q_n(N(y)/n)$ to give yet another
proof of the \Bernstein--von Mises part of Proposition
\refloc{p7.2}.

\medskip
{\it Simultaneous confidence bands.}
In our illustrations we focussed on confidence bands with correct
pointwise coverage. One may also construct simultaneous bands for
the different situations, with some more work. For the Doksum shift
function, in the frequentist setting, such simultaneous bands were
constructed in Doksum (1974a), Doksum and Sieverts (1976) and
Switzer (1976). To match this in the Bayesian setting, one might
simulate a large number of $D(x)$ curves from the posterior process,
and note the quantiles of the distribution of simulated
$\max_{[a,b]}|D(x)-\hatt D_0(x)|$ across some interval $[a,b]$ of
interest. Another method, using result (\refloc{eq:asDoksum2}), 
is to note that 
$N^{1/2}\max_{a\le x\le b}|D(x)-\hatt D_0(x)|\,\midd\data$
tends in distribution to 
\beqn
\max_{a\le x\le b}|Z_D(x)|
={1\over \{c(1-c)\}^{1/2}}
   \max_{F(a)\le v\le F(b)}{|W^0(v)|\over g_\true(G_\true^{-1}(v))}.
\eeqn
With appropriate consistent estimation of the denumerator
one might simulate the required quantile of the limiting
distribution. Other bands evolve with alternative
weight functions.

\medskip
{\it Further quantilian quantities.}
There are yet other statistical functions or parameters
of interest that depend on quantile functions
and that can be worked with using methods from
our chapter. One such quantity is the total time on test statistic
$T(u)=\int_0^{Q(u)}\{1-F(x)\}\,\d x$.
Doksum and James (2004) show how inference
for $T$ may be carried out via Bayesian bootstraps.

\medskip
{\it More informative priors for two-sample problems.}
In situations where the Doksum band contains a horizontal line it
indicates that the shift function is nearly constant, which
corresponds to a location translation from $F$ to $G$, say
$G(t)=F(t-\delta)$. For the Doksum--Bjerkedal data analysed in
Figure~3 
the band nearly contains a linear curve, which
indicates a location-and-scale translation, say
$G(t)=F((t-\delta)/\tau)$. The present point is that it is fruitful
to build Bayesian prior models for such scenarios, linking $F$ and
$G$ together, as opposed to simply assuming prior independence of
$F$ and $G$. One version is to take $F\sim\Dir(aF_0)$ and then
$G(t)=F((t-\delta)/\tau)$ with a prior for $(\delta,\tau)$. This
leads to fruitful posterior models for $(F,\delta,\tau)$.


\section*{Appendix: various proofs}

{\it Relation between Beta cumulatives.}
Let $\be(\cdot;a,b)$ and $\Be(\cdot;a,b)$ denote the density and cumulative
distribution of a Beta variable with parameters $(a,b)$.
Then, by partial integration, for $b>1$, 
$$\Be(c;a,b)-\Be(c;a+1,b-1)={\be(c;a+1,b)\over a+b}
            ={\be(1-c;b,a+1)\over a+b}. \eqno({\rm A1})$$

\medskip
{\it Proof of Proposition \refloc{p2.1}.}
There are several ways in which to prove this, including analysis
via Taylor type expansions of the (\refloc{eq:deltaHna})
probabilities and their sum; see also Conti (2004). Here we briefly
outline another and more probabilistic argument. The idea is to
decompose the posterior distribution of $F$ in two parts,
corresponding to jumps $D_1,\ldots,D_n$ at the data points and a
total probability $E=F(\RR-\{x_1,\ldots,x_n\})$ representing all
increments between the data points. Thus
$$F(t)=\sumin D_iI\{x_{(i)}\le t\}+\sumin E_iI\{x_{(i)}\le t\}
   =\tilda F(t)+F^*(t), $$
say, with $E_i$ the part of $E$ corresponding to
the window $(x_{(i-1)}, x_{(i)})$ between data points.
The point here is that $(D_1,\ldots,D_n,E)$
has a Dirichlet $(1,\ldots,1,a)$ distribution,
with $E$ becoming small in size as $n$ increases.
In fact, $E\le a/\rootn$ with probability at least $1-1/\rootn$.
Thus $F=\tilda F+F^*$ with $F-\tilda F\le a/\rootn$,
with high probability, and $Q=F^{-1}$ must with a high probability
be close to $\tilda Q=\tilda F^{-1}$. But the latter has all
its jumps exactly situated at the data points. \square

\medskip
{\it Proof of Proposition \refloc{p3.1}.}
We first recall that for any cumulative distribution function $H$ on
the real line,
$$\int_0^\infty x\,\d H(x)=\int_0^\infty\{1-H(x)\}\,\d x,
  \quad
  \int_{-\infty}^0 x\,\d H(x)=-\int_{-\infty}^0 H(x)\,\d x. $$
These results can be shown using partial integration and the Fubini
theorem, and hold in the sense that finiteness of one integral
implies finiteness of the sister integral, and vice versa. These
formulae are what is being used when we in Section~3 preferred
formula (\refloc{eq:Qhatformula}) to (\refloc{eq:Qahat}).

With the above formulae and characterisations we learn
that the finite existence of the posterior mean of $Q(y)$
hinges on the finiteness of the extreme parts
$\int_c^\infty\Be(y;aF_0(x)+n,a\bar F_0(x))\,\d x$,
for $c\ge x_{(n)}$,
and $\int_{-\infty}^b\Be(1-y;a\bar F_0(x)+n,aF_0(x))\,\d x$,
for $b\le x_{(1)}$.
Using $\Gamma(v)=\Gamma(v+1)/v$
the first integral may be expressed as
$$\int_c^\infty{\Gamma(a+n)a\bar F_0(x)
   \over \Gamma(aF_0(x)+n)\Gamma(a\bar F_0(x)+1)}
   \Bigl[\int_0^y u^{aF_0(x)+n-1}(1-u)^{a\bar F_0(x)-1}\,\d u\Bigr]\,\d x, $$
which is of the form $\int_c^\infty a\bar F_0(x)g(x)\,\d x$
for a bounded function $g$; hence this the integral is finite
if and only if $\int_c^\infty\{1-F_0(x)\}\,\d x$ is finite.
We may similarly show that the second integral is finite
if and only if $\int_{-\infty}^b F_0(x)\,\d x$ is finite.
These arguments are valid for any $n$, also for the
no-sample prior case of $n=0$. This proves the proposition.
\square

\section*{Acknowledgements}

The authors gratefully acknowledge support and hospitality from the
Department of Mathematics at the University of Oslo and the Istituto
di Metodi Quantitativi at Bocconi University in Milano, at
reciprocal research visits. 
Constructive comments from 
Dorota Dabrowska, Alan Gelfand, Pietro Muliere, 
Vijay Nair and Stephen Walker 
have also been appreciated. 

\def\annstat{Annals of Statistics}
\def\annprob{Annals of Probability}
\def\jasa{Journal of the American Statistical Association}
\def\statscience{Statistical Science}


\begin{thebibliography}{000}

\bibitem {}
\textsc{Billingsley, P.} (1968).
{\sl Convergence of Probability Measures.}
Wiley, New York.

\bibitem {}
\textsc{Bickel, P.J.~and Doksum, K.A.} (2001).
{\sl Mathematical Statistics: Basic Ideas and Selected Topics}
(2nd ed.), Volume 1.
Prentice Hall, Upper Saddle River, New Jersey.

\bibitem {}
\textsc{Bjerkedal, T.} (1960).
Acquisition of resistance in guinea pigs infected
with different doses of virulent tubercle bacilli.
{\sl American Journal of Hygiene} {\bf 72}, 132--148.


\bibitem {}
\textsc{Cheng, C.} (1995).
The Bernstein polynomial estimator of a smooth quantile function.
{\sl Statistics and Probability Letters} {\bf 24}, 321--330.

\bibitem {}
\textsc{Conti, P.L.} (2004).
Approximated inference for the quantile function
via Dirichlet processes.
{\sl Metron} {\bf LXII}, 201--222.

\bibitem {}
\textsc{Diaconis, P.~and Freedman, D.A.} (1986a).
On the consistency of Bayes estimates [with discussion].
{\sl\annstat} {\bf 14}, 1--67.

\bibitem {}
\textsc{Diaconis, P.~and Freedman, D.A.} (1986b).
On inconsistent Bayes estimates of location.
{\sl\annstat} {\bf 14}, 68--87.

\bibitem {}
\textsc{Doksum, K.A.} (1974a).
Empirical probability plots and statistical inference
for nonlinear models in the two-sample case.
{\sl\annstat} {\bf 2}, 267--277.

\bibitem {}
\textsc{Doksum, K.A.} (1974b).
Tailfree and neutral random probabilities
and their posterior distributions.
{\sl\annprob} {\bf 2}, 183--201.

\bibitem {}
\textsc{Doksum, K.A.~and Sievers, G.L.} (1976).
Plotting with confidence: Graphical comparisons
of two populations.
{\sl Biometrika} {\bf 63}, 421--434.

\bibitem {}
\textsc{Doksum, K.A.~and James, L.F.} (2004).
On spatial neutral to the right processes
and their posterior distributions.
In {\sl Mathematical Reliability:
An Expository Perspective} (eds.~R.~Soyer,
T.A.~Mazzuchi and N.D.~ Singpurvalla),
Kluwer International Series, 87--104.

\bibitem {}
\textsc{Doss, H.~and Gill, R.D.} (1992).
An elementary approach to weak convergence for quantile processes,
with applications to censored survival data.
{\sl\jasa} {\bf 87}, 869--877.

\bibitem {}
\textsc{Ferguson, T.S.} (1973).
A Bayesian analysis of some nonparametric problems.
{\sl\annstat} {\bf 1}, 209--230.

\bibitem {}
\textsc{Ferguson, T.S.} (1974).
Prior distributions on spaces of probability measures.
{\sl\annstat} {\bf 2}, 615--629.

\bibitem {}
\textsc{Hjort, N.L.} (1986).
Discussion contribution to P.~Diaconis and D.~Freedman's paper
`On the consistency of Bayes estimates',
{\sl\annstat} {\bf 14}, 49--55.

\bibitem {}
\textsc{Hjort, N.L.} (1991).
Bayesian and empirical Bayesian bootstrapping.
Statistical Research Report, University of Oslo.

\bibitem {}
\textsc{Hjort, N.L.} (1996).
Bayesian approaches to non- and semiparametric density estimation
[with discussion].
In {\sl Bayesian Statistics 5},
proceedings of the Fifth International Val\`encia Meeting
on Bayesian Statistics (eds.~J.~Berger,
J.~Bernardo, A.P.~Dawid, A.F.M.\allowbreak~Smith),
223--253. Oxford University Press.

\bibitem {}
\textsc{Hjort, N.L.} (2003).
Topics in nonparametric Bayesian statistics [with discussion].
In {\sl Highly Structured Stochastic Systems}
(eds.~P.J.~Green, S.~Richardson and N.L.~Hjort),
Oxford University Press.

\bibitem {}
\textsc{Hjort, N.L.~and Walker, S.G.} (2006).
Quantile pyramids for Bayesian nonparametrics.
{\sl\annstat}, to appear.

\bibitem {}
\textsc{Johnson, R.A.~and Sim, S.} (2006).
Nonparametric Bayesian inference about percentiles.
This volume.

\bibitem {}
\textsc{Kottas, A.~and Gelfand, A.} (2001).
Bayesian semiparametric median regression modeling.
{\sl \jasa} {\bf 96}, 1458--1468.

\bibitem {}
\textsc{LeCam, L.~and Yang, G.L.} (1990).
{\sl Asymptotics in Statistics.}
Springer-Verlag, New York.


\bibitem {}
\textsc{Lo, A.Y.} (1987).
A large-sample study of the Bayesian bootstrap.
{\sl\annstat} {\bf 15}, 360--375.

\bibitem {}
\textsc{Lorenz, M.C.} (1905).
Methods of measuring the concentration of wealth.
{\sl\jasa} {\bf 9}, 209--219.

\bibitem {}
\textsc{Laake, P., Laake, K.~and Aaberge, R.} (1985).
On the problem of measuring the distance between distribution functions:
Analysis of hospitalization versus mortality.
{\sl Biometrics} {\bf 41}, 515--523.

\bibitem {}
\textsc{Newton, H.J.} (2002).
A conversation with Emanuel Parzen.
{\sl\statscience} {\bf 17}, 357--378.
Correction, op.~cit., 467.

\bibitem {}
\textsc{Parzen, E.} (1979).
Nonparametric statistical data modeling [with discussion].
{\sl\jasa} {\bf 74}, 105--131.

\bibitem {}
\textsc{Parzen, E.} (1982).
Data modeling using quantile and density-quantile functions.
{\sl Some recent advances in statistics},
Symposium Lisbon 1980, 23--52.

\bibitem {}
\textsc{Parzen, E.} (2002).
Discussion of Breiman's `Statistical modeling: The two cultures'.
{\sl\statscience} {\bf 16}, 224--226.

\bibitem {}
\textsc{Sheather, S.J.~and Marron, J.S.} (1990).
Kernel quantile estimation.
{\sl\jasa} {\bf 80}, 410--416.

\bibitem {}
\textsc{Shorack, G.R.~and Wellner, J.} (1986).
{\sl Empirical Processes With Applications to Statistics.}
Wiley, New York.

\bibitem {}
\textsc{Switzer, P.} (1976).
Confidence procedures for two samples.
{\sl Biometrika} {\bf 53}, 13--25.

\bibitem {}
\textsc{Aaberge, R.} (2001).
Axiomatic characterization of the Gini coefficient and Lorenz curve
orderings.
{\sl Journal of Economic Theory} {\bf 101}, 115--132.
Correction, ibid.

\bibitem {}
\textsc{Aaberge, R., Bjerve, S.~and Doksum, K.A.} (2005).
Lorenz, Gini, Bonferroni and quantile regression.
Unpublished manuscript.

\end{thebibliography}
\end{document}